\newtheorem{theorem}{Theorem}
\newtheorem{lemma}[theorem]{Lemma}
\newtheorem{corollary}[theorem]{Corollary}
\newtheorem{prop}[theorem]{Proposition}
\newtheorem{conjecture}{Conjecture}
\theoremstyle{definition}
\newtheorem{definition}{Definition}
\newtheorem{problem}{Open Problem}
\newcommand\Wider[2][4em]{%
\makebox[\linewidth][c]{%
  \begin{minipage}{\dimexpr\textwidth+#1\relax}
  \raggedright#2
  \end{minipage}%
  }%
}
\newcommand\cut{\textsc{cut}} 
\newcommand\grundy[1]{\mathcal{G}(#1)} 
\newcommand{\preG}[1]{\ensuremath{G(#1)}}
\newcommand{\G}{\ensuremath{\mathcal G}}
\newcommand{\N}{\mathbb N}
\newcommand{\C}{\ensuremath{\mathcal C}}
\renewcommand{\S}{\mathcal S}
\newcommand\AP[1]{\text{AP}(#1)}
\DeclareMathOperator{\mex}{mex}
\title{Partition games}
\author{Antoine Dailly$^{\dagger\ddagger}$\footnotemark[1] , \'Eric Duch\^ene$^\dagger$\footnotemark[1] , Urban Larsson$^\P$, Gabrielle Paris$^\dagger$\footnotemark[1]}
\date{$^\dagger$Univ Lyon, Universit\'e Lyon 1, LIRIS UMR CNRS 5205, F-69621, Lyon, France.\\$^\ddagger$Instituto de Matem\'aticas, UNAM Juriquilla, 76230 Quer\'etaro, Mexico.\\$^\P$National University of Singapore, Singapore.} 
\begin{document}
\renewcommand{\thefootnote}{\fnsymbol{footnote}}
\footnotetext[1]{Supported by the ANR-14-CE25-0006 project of the French National Research Agency}

\maketitle
\renewcommand{\thefootnote}{\arabic{footnote}}
\begin{abstract}
We introduce {\sc cut}, the class of 2-player partition games. These are {\sc nim} type games, played on a finite number of heaps of beans. The rules are  given by a set of positive integers, which specifies the number of allowed splits a player can perform on a single heap. In normal play, the player with the last move wins, and the famous Sprague-Grundy theory provides a solution. We prove that several rulesets have a periodic or an arithmetic periodic Sprague-Grundy sequence (i.e. they can be partitioned into a finite number of arithmetic progressions of the same common difference). This is achieved directly for some infinite classes of games, and moreover we develop a computational testing condition, demonstrated to solve a variety of additional games. 
Similar results have previously appeared for various classes of games of {\sc take-and-break}, for example {\sc octal} and {\sc hexadecimal}; see e.g. Winning Ways by Berlekamp, Conway and Guy (1982). In this context, our contribution consists of a systematic study of the subclass `break-without-take'. 
\end{abstract}

\section{Introduction}
This work concerns 2-player combinatorial games related to the classical game of {\sc nim}, but instead of removing objects, say beans, from heaps, players are requested to partition the existing heaps into smaller heaps, while the total number of beans remain constant. We prove several regularity results on the solutions of such games. 

Integer partition theory, related to Ferrer diagrams and Young tableaus, is a classical subject in number theory and combinatorics, dating back to giants such as Lagrange, Goldbach and Euler; it concerns the number of ways you can write a given positive integer as a sum of specified parts. In most generality, to each positive integer $n\in \N=\{1,2,\ldots\}$, there belongs a number $p(n)$, which counts the unrestricted number of ways this can be done. For example $4=3+1=2+2=2+1+1=1+1+1+1$, so $p(4)=5$. We may index this partition number by saying exactly how many parts is required, and write $p_k(n)$ for the number of partitions of $n$ in exactly $k$ parts. Thus, in our example, $p_2(4)=2$ and $p_3(4)=1$. We could also define $p_{k, c}=p_k+p_ c$ and so on. The number of partitions can be beautifully expressed via generating functions, where recurrence formulas, congruence relations, and several asymptotic estimates are known, proved more recently by famous number theorists such as Ramanujan, Hardy, Rademacher and Erd\H{o}s in the early 1900s. About the same time, a theory of combinatorial games was emerging, via contributions by Bouton, Sprague and Grundy and others, seemingly unrelated to the full blossom of number theory.

An \emph{integer partition game} can be defined by 2 players alternating turns and by specifying the legal partitions, say into exactly 2 or 3 parts, until the current player cannot find a legal partition of parts, and loses. Thus, from position 4, then $3+1,2+2,2+1+1$ are the legal move options---if you play to $2+2$ you win, and otherwise not. It turns out that the idea for how to win such games is coded in a `game function', discovered independently by the mathematicians Sprague and Grundy, which, by the way, does not appear to have any direct relation to the partition function. For example, the partition functions are nondecreasing, but if a Sprague-Grundy function is nondecreasing the game is usually rather trivial, such as the game of {\sc nim} on one heap. 

Let us begin by giving the relevant game theory background to our results, that several \emph{partition games} have either a periodic or an \emph{arithmetic-periodic} Sprague-Grundy sequence.\footnote{An arithmetic-periodic sequence can be partitioned into a finite set of arithmetic progressions with the same common difference.} We call the class of partition games  {\sc cut}.

An impartial combinatorial game $G=G(\mathcal R, X, x)$ is given by a ruleset $\mathcal R$, a set of positions $X$, and a starting position $x\in X$; the ruleset $\mathcal R$ specifies how to move from any (starting) position. Two players alternate in moving, with one of the players assigned as a starting player, and in normal play a player who cannot move loses. An {\em option} of $G$ is a game $G'=G'(\mathcal R, X, y)$ that can be reached in one move, and all games have finitely many options, and finite rank.

The {\em Sprague-Grundy value} \cite{S, G} (or \G-value for short) of an impartial normal play game $G$ is the nonnegative integer $$ \grundy{G}= \mex \{\grundy{G'} \mid G' \text{ is an option of } G \}, $$ where $\mex(U)=\min \N_0\setminus U$ is the smallest nonnegative integer that does not belong to the strict subset $U\subset\N_0=\N\cup\{0\}$.

The Sprague-Grundy value can be used to determine the winner in perfect play. Indeed, a game $G$ satisfies $\grundy{G}=0$ if and only if playing first in $G$ loses; \emph{i.e.} the \emph{previous player} wins.

Heap games are typically played on several heaps of the form $H_n$, where $n\in \N_0$ denotes the number of beans in a heap, and there is a given ruleset that specifies the legal options on the heaps. An important concept is that of a disjunctive sum of heaps.
A \emph{disjunctive sum} of $k$ heaps of sizes $i_0,\ldots , i_k$ is denoted $H=H_{i_0}+\cdots +H_{i_k} $, where by moving, a player chooses one of the heaps, say $H_{i_j}$ and makes a move in this heap according to a given ruleset. The other heaps remain the same. That is, a typical move option of the game $H$ is of the form $H'$ with
\begin{align}\label{eq:moveoption}
H' = H_{i_0} + \cdots +H_{i_{j-1}}+{H_{i_j}}'+H_{i_{j+1}}+\cdots+H_{i_k},
\end{align}
where ${H_{i_j}}'$ is a move option on the heap $H_{i_j}$.\footnote{In general, an option may include several heaps, where each new heap is smaller than the previous heap, to allow recursive computation of the Sprague-Grundy values; in this paper, exactly one of the heaps in a sum of heaps will be cut (broken, partitioned, split) into at least 2 heaps. See also Section~\ref{sec:background} for a surrounding context and a review of classes of heap games, such as octal games, hexadecimal games, and so on.}

By the main result of Sprague-Grundy Theory, $\grundy{H}=\grundy{H_{i_0}}\oplus \cdots \oplus \grundy{H_{i_k}}$, where $\oplus$ is the standard \emph{nim-sum} operator.\footnote{Consider nonnegative integers $m=\sum e_i2^i$ and $n=\sum f_i2^i$. Then $n\oplus m=\sum (e_i\text{XOR} f_i) 2^i$.}  Therefore, the previous player wins if and only if the nim-sum is 0. 

The following result about the nim-sum operator will be used several times in this paper.

\begin{lemma}\label{lem:parity}
  Consider any $a_0,\dots, a_m\in\N_0$. Then $$a_0\oplus a_1 \oplus  \dots \oplus a_m \equiv (a_0+\cdots + a_m) \bmod 2$$
  and
  $$ a_0+\dots +a_m\geq a_0\oplus \dots\oplus a_m.$$
\end{lemma}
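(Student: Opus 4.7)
The plan is to prove both statements simultaneously by a straightforward bit-by-bit analysis, as both facts are essentially statements about how the XOR of a column of bits compares to the arithmetic sum of that column.

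First I would write each $a_j$ in binary as $a_j = \sum_{i\geq 0} e_{j,i}\, 2^i$, with $e_{j,i}\in\{0,1\}$, and define the column sums $s_i = \sum_{j=0}^{m} e_{j,i}$. By the very definition of nim-sum given in the footnote, the $i$th bit of $a_0\oplus\cdots\oplus a_m$ is $s_i \bmod 2$. Meanwhile, the ordinary integer sum satisfies $a_0 + \cdots + a_m = \sum_{i\geq 0} s_i\, 2^i$, keeping in mind that large $s_i$ produce carries when this expression is normalized into binary, but that is irrelevant for what follows.

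For the congruence, I only need the bit at position $i=0$. On the one hand, the least significant bit of $a_0\oplus \cdots \oplus a_m$ is $s_0 \bmod 2$, which is by definition the parity of $a_0\oplus\cdots\oplus a_m$. On the other hand, the parity of $a_0+\cdots+a_m$ equals the parity of $\sum_i s_i\, 2^i$, and every term with $i\geq 1$ is even, so the parity is again $s_0\bmod 2$. The two parities agree, which is exactly the first claim.

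For the inequality, I compare the two expressions bit by bit. Since $s_i\bmod 2 \leq s_i$ for every $i$, we get
\[
a_0\oplus\cdots\oplus a_m \;=\; \sum_{i\geq 0} (s_i\bmod 2)\, 2^i \;\leq\; \sum_{i\geq 0} s_i\, 2^i \;=\; a_0+\cdots+a_m,
\]
which is the second claim. I do not anticipate any real obstacle: the only point where one might slip is to remember that $\sum_i s_i 2^i$ is a legitimate (though non-standard) way of writing the integer sum, and that the inequality goes in the right direction because carrying during addition only increases the value of higher-order bit positions, while XOR simply discards the carry.
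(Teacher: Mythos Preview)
Your proof is correct and follows essentially the same bit-by-bit approach as the paper: both arguments observe that the parity is governed by the column sum $s_0$ of least significant bits, and that the inequality follows from $s_i\bmod 2\le s_i$ at each bit position. Your write-up is simply more explicit about the column-sum notation than the paper's terse version.
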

\begin{proof}
Both the nim-sum and the standard sum is odd if and only if the number of odd $a_i$s is odd. This proves the first part.
 For each index $j\in[0,m]$, set $\sum_i e_i(j)2^i=a_j$. Then, by definition, for each $i$, $\sum_j e_j\ge\bigoplus e_j$, which proves the second part.
\end{proof}

As evidenced by (\ref{eq:moveoption}), to study a heap game played on a disjunctive sum of heaps, it suffices to describe the rules on a single heap. 

\begin{definition}[\cut]\label{def:cuts}
The \cut\ \emph{ruleset} $\C\subseteq \N$ specifies the allowed number of \emph{cuts} (or \emph{splits}) of a given heap $H_n = H_n(\C)$ into a disjunctive sum of heaps, as $$H_n=\{H_{n_0}+\cdots+H_{n_ c} \mid  c \in \C, \forall i: n_i>0, n_0+\cdots+n_ c=n \}$$
An instance of $H_n$ is called a $c$-\emph{cut} or an \emph{option} of $n$, and is denoted $O_n\in H_n$.
\end{definition} 

That is, by combining Definition~\ref{def:cuts} with \eqref{eq:moveoption}, a move consists in splitting one of the heaps in the disjunctive sum $H$ into $ c+1$ non-empty heaps with $ c\in \mathcal C$, while the other heaps remain the same.\footnote{Here $\C$ is an \emph{invariant ruleset} in an analogous sense as defined in \cite{DR} for subtraction games. The term `invariant' is with respect  to the size of the heap, so that if $c\in\C$ then a $c$-cut is available for all $H_n$, with $n>c$. We often think of the sequence of all heap games for a given ruleset $\C$, as  the `fingerprint' of a game. As it is common in CGT papers, we will deemphazise the actual strategies of how to win individual games, and instead evaluate a game in terms of the sequence of Sprague-Grundy values of all heaps.}

Let us illustrate Definition~\ref{def:cuts} with an example. Assume that $\C=\{3,4\}$, that is, we can split a heap into either~4 or~5 non-empty heaps. Assume that the initial position is the heap $H_{13}$. The first player decides to split it into four heaps, leaving $H_1+H_3+H_3+H_6$. The second player has to choose one of those four heaps and split it, which they can only do with $H_6$, after which the game ends since no heap can be split anymore. Note that the first player could have chosen to split the heap into four equal heaps of size~3 and one of size~1, which would have ended the game immediately.

In this paper, we find the $\mathcal{G}$-sequences for several instances of {\sc cut}. In Section~\ref{sec:psg}, we solve the classes:

\begin{itemize}
\item $\C\subseteq$ `odd numbers';
\item $\min\C\ge 2$;
\item $\{1,2,3\}\subset \C$;
\item $\C=\{1,3,2k\}$, $k$ even.
\end{itemize}

Later, in Section~\ref{sec:aptsg}, we solve several more instances of {\sc cut} by adapting a computational testing condition, that we will call the AP-test, summarized in Table~\ref{tab:someGames} (see also Table~\ref{tab:recap}).

A typical representative of {\sc cut} is $G(\C,\N,H)$, but we usually omit $\N$ since it is understood, and also $H$, because we will gain knowledge of the Sprague-Grundy values of individual heaps (which then implies knowledge about a disjunctive sums of heaps). In this spirit, we continue to write just $G(\C)$, and we will return to this notation.

To simplify notation, for the Sprague-Grundy value of a heap $H_n$ of size $n\ge 1$, we will write $\grundy{H_{n}} = \grundy{n}$, and for a disjunctive sum of heaps, we write $H_{n_0}+\cdots +H_{n_k}=(n_0,\ldots , n_k)$ and $\grundy{H_{n_0}+\cdots +H_{n_k}}=\grundy{n_0,\ldots , n_k}$.

There are several similarities with {\sc nim}-type games, where only removal is possible. For example, if $\mathcal C = \N$, then $H_{n+1}$ is equivalent with a {\sc nim} heap of size $n$.  This follows, because both are normal play impartial games and $H_n(\textsc{nim})\subset H_{n+1}(\mathcal C)$, and argue by induction on the Sprague-Grundy values. See also Proposition~\ref{prop:equivalence} in Section~\ref{sec:background} for a more general observation relating to  the classical {\sc take-and-break} games.

This work was much inspired by in particular two classical combinatorial games, namely {\sc grundy's game}~\cite{G} and {\sc couples-are-forever}~\cite{couples}; see also \cite{F}. In the first one, a move consists in choosing a heap and splitting it into two heaps of \emph{different} size. The latter one allows to split any heap of size \emph{at least three} into two heaps. For both games, some extra constraints have been adjoined to the type of possible splits, and no regularities in the $\mathcal{G}$-sequences have yet been observed. Here, we study partition games with no extra constraint than prescribed \emph{splitting}- or \emph{cut}-numbers.

{\sc nim}-type games on finite rulesets are known as {\sc subtraction} \cite{WW}, and it is a folklore result that all such games are periodic (i.e. their Sprague-Grundy sequences $(\grundy{H_n})_{n\in\N}$ are periodic). This follows by a simple combinatorial counting argument. For the game \cut, the situation is more varied, and we will encounter both periodic, and arithmetic-periodic games. Some rulesets are not yet fully understood, and in particular the ruleset $\C=\{1,2\}$ remains a mystery (it does not seem to be arithmetic-periodic); see Section~\ref{sec:conper}.

We use the notion of a `game' in at least two different ways, both depending on the notion of a `ruleset'. A $\cut$ game that can be enjoyed as a recreational game is a ruleset $\C$ together with a heap $H_n$ of a given size $n$ (or a disjunctive sum of heaps $H$). On the other hand, a minimal requirement for a strategic understanding of a ruleset is to acquire its \G-sequence (\emph{i.e.} $\G(H_1),\G(H_2),\ldots$). One consequence of the \G-sequence is that you might be indifferent between two rulesets (independently of the size of a heap). From this perspective it is more natural to think of a game, as its sequence of \G-values. The context will decide, and we believe that the word ``game" should remain a word without a precise definition, available for use in various contexts; on the other hand, a ``ruleset" should always have a well defined meaning.

In this spirit, and to emphasize the motivation of this paper, we will identify $\preG{\C}$ with its sequence of Sprague-Grundy values $\G(H_1),\G(H_2),\ldots$. For example, we write $\preG{\{1\}}=0,1,0,1,\ldots $ (See Proposition~\ref{prop:1odd}.)

We will index the elements in a given set $\C$ such that, for all $i$, $ c_i <  c_{i+1}$. 

A well-studied periodicity problem, by Richard Guy, on so-called octal games,\footnote{Such games allow at must one cut, but it might be combined with various removals; see Section~\ref{sec:background} on class of {\sc take-and-break}.} does not transfer to the full class of partition games. Indeed, the following lemma establishes that if $\C$ contains an even cut-number, the Sprague-Grundy values are unbounded.

\begin{theorem}
\label{lem:ericslemma}
If a ruleset $\C$ contains an even cut-number, then $G(\C)$ (\emph{i.e.} its Sprague-Grundy-sequence) is unbounded. In particular, if the smallest even cut-number is $c\in\C$, then any arithmetic progression of the form $x, x+c, x+2c,\ldots $, contains no repetition of \G-values, and hence it contains infinitely many \G-values.
\end{theorem}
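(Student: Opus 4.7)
The plan is to prove the stronger claim: with $c$ the smallest even element of $\C$, for every $x\ge 1$ the Sprague-Grundy values along the arithmetic progression $x, x+c, x+2c,\ldots$ are pairwise distinct. Since these are infinitely many nonnegative integers, unboundedness of $G(\C)$ is then immediate.

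The key idea is that the nim-sum of an even number of identical values equals $0$. For any $n$ and any $m\ge 1$ with $n-cm\ge 1$, a legal $c$-cut of $H_n$ is
\[ H_{n-cm} + \underbrace{H_m + \cdots + H_m}_{c\text{ copies}}, \]
since this is a disjunctive sum of exactly $c+1$ nonempty heaps summing to $n$. By Sprague--Grundy theory its value is
\[ \G(n-cm) \oplus \underbrace{\G(m) \oplus \cdots \oplus \G(m)}_{c\text{ copies}} = \G(n-cm), \]
where the $c$ copies of $\G(m)$ cancel pairwise because $c$ is even. The definition of $\mex$ then forces $\G(n) \ne \G(n-cm)$.

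Applying this with $n = x+\ell c$ and letting $m$ run through $1, 2, \ldots, \ell$ (the hypothesis $x\ge 1$ ensures $x+ic\ge 1$ throughout), I obtain $\G(x+\ell c) \ne \G(x+ic)$ for every $0\le i<\ell$ and every $\ell \ge 1$. This says precisely that the sequence $\G(x), \G(x+c), \G(x+2c), \ldots$ is injective, which gives both conclusions of the theorem.

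The only nontrivial step is recognising the cancelling option above; once it is in hand, the $\mex$ argument is routine, so I do not anticipate any real obstacle.
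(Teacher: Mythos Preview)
Your proof is correct and follows essentially the same approach as the paper: both exhibit the $c$-cut $(n-cm,\,m,\ldots,m)$ (the paper writes it as $(x_2,\,q_1-q_2,\ldots,q_1-q_2)$), use evenness of $c$ to cancel the $c$ identical summands in the nim-sum, and conclude via the $\mex$ rule that $\G(n)\ne\G(n-cm)$.
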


\begin{proof}
It suffices to show that, for every pair of heaps $H_{x_1}, H_{x_2}$, with $x_1\neq x_2$ such that $\grundy{x_1}=\grundy{x_2}$, then $x_1 \not\equiv x_2 \bmod c$.

Suppose that $x_1 \equiv x_2 \bmod c$. Then $x_1 = q_1c+r$ and $x_2 = q_2c+r$ for some $0< r\le c$, $q_1, q_2\in\N_0$. Assume without loss of generality that $x_1>x_2$. Thus $q_1-q_2\ge 1$, and therefore one can $c$-cut $H_{x_1}$ to the option $O_{x_1}=(x_2,q_1-q_2,\dots, q_1-q_2)$.  Since $c$ is even, by the definition of nim-sum, $\grundy{O_{x_1}}=\grundy{x_2}$, and thus, by definition of the $\mex$-function, $\grundy{x_1} \neq \grundy{x_2}$.
\end{proof}

In Section~\ref{sec:psg}, we consider several families of partition games (e.g. those where $1\notin \C$, or those with only odd values in $\C$) and prove their pure periodicity or pure arithmetic-periodicity. For the remaining families, many games seem to have a purely arithmetic-periodic behavior. To deal with them, we provide, in Section~\ref{sec:aptsg}, a set of testing conditions whether a game is purely arithmetic-periodic, and apply them to particular instances. In Section~\ref{sec:background}, we give some background on classical games of {\sc take-and-break}, such as {\sc octal} and {\sc hexadecimal} games. Finally, in Section~\ref{sec:conper}, we mention some remaining rulesets $\C$ for which a possible regularity of the $\mathcal{G}$-sequence remains open, as well as other open questions.

\section{Particular partition games}\label{sec:psg} 

In this section, we study some specific families of partition games with arithmetic-periodic Sprague-Grundy sequences. The notation $(m_1, \ldots, m_p)~(+s)$   describes the arithmetic-periodic sequence of \emph{period} $p$ and \emph{saltus} $s$ for which the first $p$ values are $m_1, \ldots, m_p$. We write $(m_i,\ldots,m_j)^k$, if a subsequence $(m_i,\ldots,m_j)$ is repeated $k$ times. Thus, for example,  $(0,1,2)^2~(+3)$ denotes the arithmetic-periodic sequence of period 6 and saltus 3, \emph{i.e.} $0,1,2,0,1,2,3,4,5,3,4,5,\ldots $

As in Theorem~\ref{lem:ericslemma}, we will often encounter a unique decomposition of an integer via the division algorithm.  For a given divisor (or period) $p\in \N$, for any $n\in \N$, there exists a unique ordered pair of nonnegative integers $(q_n,r_n)=(q, r)$ such that $0< r \le p$ and $n=pq+r$.\footnote{We use the convention $0< r \le p$, since $H_1$ is terminal in \cut.} For consistency we will always use the letters $q$ and $r$ with this meaning, sometimes, but not necessarily, indexed to indicate their origin. 

We first show that if $\C$ contains only odd numbers, including the possibility to split a heap into exactly two heaps, then the \G-sequence of \preG{\C} is purely periodic with period $2$.

\begin{prop}\label{prop:1odd}
Let $\C$ be a (possibly infinite) ruleset consisting exclusively of odd cut-numbers, with $\min \C=1$. Then $\preG{\C}=0,1,0,1,\ldots$, \emph{i.e.} $\grundy{n}=0$ if $n$ is odd and $\grundy{n}=1$ if $n$ is even.
\end{prop}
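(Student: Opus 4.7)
The plan is to proceed by strong induction on the heap size $n$, with the inductive hypothesis that $\grundy{m}\in\{0,1\}$ takes the claimed value for every $1\le m<n$. The base case $n=1$ is immediate: every $c\in\C$ satisfies $c\ge 1$, so no $c$-cut of $H_1$ into $c+1$ strictly positive heaps exists, and hence $\grundy{1}=\mex\emptyset=0$, matching the claim for the odd value $1$.

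For the inductive step, I would fix $n\ge 2$ and describe a generic option. Any option $O_n$ is obtained by choosing some $c\in\C$ with $c<n$ and writing $n=n_0+\cdots+n_c$ with each $n_i\ge 1$; by construction every $n_i<n$, so the inductive hypothesis gives $\grundy{n_i}\in\{0,1\}$. Since each summand lies in $\{0,1\}$, the first part of Lemma~\ref{lem:parity} shows that the nim-sum $\grundy{O_n}=\grundy{n_0}\oplus\cdots\oplus\grundy{n_c}$ equals the ordinary sum modulo $2$, which in turn equals the number of even-sized $n_i$'s modulo $2$.

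The combinatorial heart of the argument is then a parity count. Because every $c\in\C$ is odd, the total number of heaps in any option, $c+1$, is even. Writing $E$ for the number of even $n_i$'s and $D$ for the number of odd ones, we have $E+D=c+1$ (even) and $D\equiv n_0+\cdots+n_c=n\pmod 2$. Thus $E\equiv D\equiv n\pmod 2$. So when $n$ is odd, every option satisfies $\grundy{O_n}=1$, giving $\grundy{n}=\mex\{1\}=0$; when $n$ is even, every option satisfies $\grundy{O_n}=0$, and to conclude $\grundy{n}=\mex\{0\}=1$ I only need to exhibit at least one option, which is supplied by the assumption $1\in\C$: the $1$-cut $n=1+(n-1)$ is legal for all $n\ge 2$.

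I do not expect any real obstacle; the only subtle points are remembering that $c+1$ (not $c$) is the number of resulting heaps, so that $c$ odd yields an even number of parts, and taking care that Lemma~\ref{lem:parity} applies only because the inductive hypothesis forces all summand \G-values into $\{0,1\}$, where nim-sum and parity sum coincide. The hypothesis $\min\C=1$ is used exactly once, to guarantee a legal move from each even heap $H_n$ with $n\ge 2$.
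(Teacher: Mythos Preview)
Your proof is correct and follows essentially the same approach as the paper's: strong induction on $n$, together with the parity count showing that the number of even-sized parts in any option has the same parity as $n$ (because $c+1$ is even). You are in fact slightly more explicit than the paper in two places: you invoke Lemma~\ref{lem:parity} directly and you spell out that the hypothesis $1\in\C$ is needed to guarantee at least one option from an even heap, a point the paper leaves implicit.
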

\begin{proof}
Note that $\grundy{1}=0$ and $\grundy{2}=1$, since $\min \C =1$.

Suppose  that $n$ is odd, and study a generic option of $H_n$. The number of heaps in $O_n$ of odd size must be odd. Therefore the number of heaps of even size is also odd. By induction, this shows that $\grundy{O_n}=1$.

Suppose next that $n$ is even. The number of heaps of odd size in $O_n$ must be even, and so the number of heaps of even size must also be even. By induction, this gives the claim for even heap sizes.
\end{proof}

In this section, and later, we get repeated use of a very simple lemma.

\begin{lemma}\label{lem:h0}
Consider a ruleset $\C$ and a disjunctive sum $h=(k,h_1, \ldots , h_c)$. If all non-zero \G-values can be paired, except for $\grundy{k}$, then $\grundy{h}=\grundy{k}$. Moreover, if $h$ is a $c$-cut of $H_n$, with $c\in \C$, then $H_n$ has an option with \G-value $\grundy{k}$.
\end{lemma}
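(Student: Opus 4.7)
The plan is to invoke the Sprague--Grundy theorem, which tells us that the \G-value of a disjunctive sum of heaps is the nim-sum (XOR) of the \G-values of the component heaps. From there, both statements fall out almost immediately, so the proof sketch really only needs to record why the pairing hypothesis collapses the XOR.

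First I would write out $\grundy{h} = \grundy{k} \oplus \grundy{h_1} \oplus \cdots \oplus \grundy{h_c}$ using the Sprague--Grundy theorem. Then I would note two trivial facts about nim-addition: (i) $x \oplus 0 = x$, so any heap with \G-value $0$ contributes nothing, and (ii) $x \oplus x = 0$, so any pair of equal non-zero \G-values cancels. By the hypothesis, the multiset $\{\grundy{h_1},\ldots,\grundy{h_c}\}$ consists of some zeros together with a collection of non-zero values that can be grouped into equal pairs. Applying the two facts repeatedly, $\grundy{h_1} \oplus \cdots \oplus \grundy{h_c} = 0$, hence $\grundy{h} = \grundy{k}$.

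For the second statement, I would simply observe that by Definition~\ref{def:cuts}, a $c$-cut of $H_n$ with $c \in \C$ is (after combining with~\eqref{eq:moveoption}) a valid move option from $H_n$. Since $h$ itself is such a $c$-cut, it is an option of $H_n$, and by the first part its \G-value is $\grundy{k}$, giving the desired option.

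There is no real obstacle here; the only thing to be a little careful about is making sure the pairing argument is phrased so that the zero \G-values are handled explicitly (they drop out of the nim-sum) rather than being forced to appear in pairs. Once that is clear, the lemma is a one-line consequence of the Sprague--Grundy theorem and the basic algebra of $\oplus$.
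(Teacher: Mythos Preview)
Your proof is correct and is essentially the same approach as the paper's, which simply records the proof as ``Obvious.'' Your write-up just unpacks what the paper leaves implicit: the Sprague--Grundy theorem gives the nim-sum decomposition, and the pairing hypothesis together with $x\oplus x=0$ and $x\oplus 0=x$ collapses everything to $\grundy{k}$.
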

\begin{proof}
Obvious.
\end{proof}

This result is useful to prove lower bounds on \G-values from given positions. (Especially since the proofs will tend to divide into several cases with similar arguments.) The upper bounds can be proved directly (using Lemma~\ref{lem:parity}), or one can show that the anticipated value cannot have appeared before for smaller heap sizes. In either way, we rely heavily on induction.

In fact, for the upper bounds, an extension of Lemma~\ref{lem:parity} will prove useful. It reveals a general property of a combination of the division algorithm and the nim-sum.

\begin{lemma}\label{lem:nooptq}
Let $h=(h_0,\ldots , h_c)$ be a $c$-cut of $n=qp+r$,\footnote{That is, $h$ is a $(c+1)$-partition of $n$.} and where, for all $i$, $h_i=pq_i+r_i$. If $c\ge p$, then $\bigoplus_{i=0}^m q_i \ne q$.
\end{lemma}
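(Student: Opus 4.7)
The plan is to argue by contradiction: assume that $\bigoplus_{i=0}^c q_i = q$ and derive a contradiction from the constraint $c \ge p$. The key tool is Lemma~\ref{lem:parity}, which already gives the inequality $\sum_{i=0}^c q_i \ge \bigoplus_{i=0}^c q_i$, together with the fact that the $c$-cut is an honest partition of $n$.

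First, I would combine the decompositions $n = qp + r$ and $h_i = pq_i + r_i$ (with $0 < r_i \le p$) using the partition identity $\sum_{i=0}^c h_i = n$. Expanding yields
\[
p\sum_{i=0}^c q_i + \sum_{i=0}^c r_i = pq + r.
\]
Under the contradiction hypothesis, Lemma~\ref{lem:parity} gives $\sum_{i=0}^c q_i \ge q$, so $p\sum_i q_i \ge pq$. Substituting back into the displayed identity forces
\[
\sum_{i=0}^c r_i \le r.
\]

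Next, I would use the lower bound on $\sum_i r_i$: since $r_i \ge 1$ for every $i$ (because $h_i \ge 1$ forces $r_i > 0$ in the division with $0 < r_i \le p$), we have $\sum_{i=0}^c r_i \ge c+1$. Combined with the previous inequality and the convention $r \le p$, this gives
\[
c + 1 \le \sum_{i=0}^c r_i \le r \le p,
\]
so $c \le p - 1$, contradicting the hypothesis $c \ge p$. Hence $\bigoplus_{i=0}^c q_i \ne q$.

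No step is really an obstacle; the whole argument is a book-keeping exercise once one notices that the "unit" of each summand $r_i$ eats into the budget $r$, and that having too many parts (namely $c+1 > p \ge r$) makes this impossible while still being consistent with $\bigoplus q_i = q$. The only mild subtlety to double-check when writing out the final version is that the convention $0 < r_i \le p$ (rather than $0 \le r_i < p$) is indeed in force here, so that $r_i \ge 1$; this is exactly what was set up for \cut\ just before the lemma.
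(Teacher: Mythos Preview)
Your proof is correct and uses essentially the same ingredients as the paper's own argument: the partition identity, Lemma~\ref{lem:parity}, and the bounds $r_i\ge 1$, $r\le p$. The paper organizes the chain of inequalities in the reverse order (first arguing $\sum r_i\ge r$ via the congruence $\sum r_i\equiv r\pmod p$, then deducing $\sum q_i\le q$, then squeezing to equality), but your route---going directly from $\sum q_i\ge q$ to $\sum r_i\le r$---is if anything slightly cleaner and avoids the intermediate equality step.
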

\begin{proof}

By way of contradiction, suppose that $q = q_0 \oplus \cdots \oplus q_m$.

Since $\sum h_i=n$, we have $$\sum_{i=0}^c (q_i c + r_i) = q c+r. $$

Note that $r\equiv \sum_{i=0}^c r_i \pmod{c}$ forces $r\le \sum_{i=0}^c r_i$, and hence $ \sum_{i=0}^c q_i \leq q $.

By combining this with the assumption and Lemma~\ref{lem:parity}, we get
$q = \bigoplus_{i=0}^c q_i \le \sum_{i=0}^c q_i \le q$,
and hence

\begin{align}\label{eq:lea}
\bigoplus_{i=0}^c q_i = \sum_{i=0}^c q_i.
\end{align}
But then $c\ge p$ gives a contradiction. Namely, by (\ref{eq:lea}) and for all $i$, $r_i>0$, we get $r= \sum_{i=0}^c r_i > c\ge p$, which contradicts the definition of $ r \le p$.
\end{proof}

First, we study partition games, where a heap splits into at least three heaps. In this case, optimal play is reduced to using only $ 2\le c_1=\min \C$, and the Sprague-Grundy sequence is purely arithmetic-periodic with period $ c_1$ and saltus 1. Here, a cut-set may be infinite. For example, if $c_1=2$, we prove that $\preG{\C}=0,0,1,1,\ldots $ Let $r$ be the smallest positive integer congruent with $n\pmod {c_1}$. We show that for all heaps $H_n$, $\grundy{n}=\frac{n-r}{c_1}=\left\lfloor \frac{n-1}{c_1}\right\rfloor$.

\begin{prop}
  \label{lem:k*}
  Consider a (possibly infinite) ruleset $\C$ with $ c_1 = \min \C \geq 2$.
  Then, $\preG{\C}=(0)^{ c_1} ~(+1)$.
\end{prop}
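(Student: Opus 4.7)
I would prove this by strong induction on $n$, writing $n = qc_1 + r$ with $0 < r \leq c_1$, and establishing that $\grundy{n} = q$. The base case $n \leq c_1$ is immediate: any $c$-cut with $c \in \C$ produces $c+1 \geq c_1+1$ positive parts and therefore requires $n \geq c_1+1$; so $H_n$ is terminal and $\grundy{n} = 0 = q$.

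For the inductive step, the upper bound $\grundy{n} \leq q$ follows directly from Lemma~\ref{lem:nooptq} applied with $p = c_1$: every $c \in \C$ satisfies $c \geq c_1$, so in any $c$-cut of $H_n$ the XOR of the quotients $q_i$ is not $q$; by the induction hypothesis $\grundy{h_i} = q_i$ for each part $h_i$, hence no option reaches G-value $q$.

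For the lower bound, I need, for every $q' \in \{0, 1, \ldots, q-1\}$, an explicit $c_1$-cut whose XOR equals $q'$. The uniform template is to take one ``carrier'' heap of size $q'c_1 + r$, which by the induction hypothesis has G-value $q'$, together with $c_1$ further heaps summing to $(q-q')c_1$ and whose G-values XOR to $0$. Two simple subcases cover most situations. If $q-q' \leq c_1$, take the $c_1$ further heaps all of size $q-q'$, each of G-value $0$ by induction, so the XOR is $q' \oplus 0 = q'$. If $c_1$ is even, the same all-equal choice still works, since an even number of equal values XORs to $0$.

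The main obstacle is the remaining case, $c_1$ odd with $q - q' \geq c_1 + 1$, where an odd number of equal nonzero G-values does not XOR to $0$. To handle this I would replace the $c_1$-tuple of equal heaps by $(c_1-1)/2$ pairs of equal-sized heaps (each pair XORing to $0$) together with one additional heap of size $e \in [1, c_1]$, which automatically has G-value $0$. Choosing $e$ with the same parity as $(q-q')c_1$ makes the residual sum $(q-q')c_1 - e$ a positive even integer of size at least $c_1 - 1$, which can then be distributed as $2$ times the sum of $(c_1-1)/2$ positive integers giving the pair sizes; this is elementary once $q - q' \geq 2$. The XOR of G-values of the resulting $c_1$-cut is $q' \oplus 0 \oplus 0 = q'$, which, combined with the upper bound, yields $\grundy{n} = q$ and closes the induction. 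Throughout, the only rulings one needs from $\C$ are the availability of $c_1$-cuts (for constructions) and the inequality $c \geq c_1$ for every $c \in \C$ (for Lemma~\ref{lem:nooptq}), which explains why the \G-sequence is independent of the rest of $\C$.
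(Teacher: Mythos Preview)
Your proof is correct and follows essentially the same strategy as the paper: strong induction, the upper bound via Lemma~\ref{lem:nooptq} with $p=c_1$, and explicit $c_1$-cuts for the lower bound using a carrier heap of size $q'c_1+r$ plus further heaps whose \G-values XOR to $0$. The only difference is cosmetic: in the odd-$c_1$ case the paper splits on the parity of $q-g-1$ and uses a single matched pair together with heaps of size $1$ (or $2$), whereas you split on whether $q-q'\le c_1$ and, in the hard subcase, assemble $(c_1-1)/2$ matched pairs plus one small heap of size $e\in[1,c_1]$---both constructions achieve the same cancellation and your size bounds go through since $(q-q')c_1-e\ge c_1^2\ge c_1-1$.
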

\begin{proof}

 We want to prove that for every positive integer $n$, $\grundy{n}=q$, where $n=c_1q+r$. Clearly $\grundy{1} = \grundy{2}=0$. Assume the statement holds for all $n'< n$.  By $c_1=\min\C$, Lemma~\ref{lem:nooptq} gives that there is no option $O_n$ with $\grundy{O_n} = q$.

It remains to  prove that $H_n$ has an option of \G-value $g$ for all $g \le q-1$.
There are two cases:
\begin{enumerate}
\item If $ c_1$ is even, then, for each integer $g \le q-1$, let $O_{n} = (g c_1+r, q-g,\ldots,q-g)$ be a $ c_1$-cut (with $c_1$ copies of $H_{q-g}$). Here Lemma~\ref{lem:h0} applies, but we give the details since it is the first occurrence of the lemma. This is an option of $H_n$, since $g c_1+r+(q-g) c_1 = q c_1 + r = n$. Furthermore, $\grundy{O_{n}} = \grundy{g c_1 + r} \oplus ( c_1 \otimes \grundy{q-g}) = g\oplus 0 = g$, since by induction $\grundy{g c_1+r} = g$ and by $ c_1$ even.
\item If $c_1$ is odd, for each \G-value $g \le q-1$, we define a $c_1$-cut $O_{n}$, such that $\grundy{O_{n}}=g$.
We have two subcases:
    \begin{enumerate}[label*=\arabic*]
    \item If $q-g-1$ is even,
let
$$\begin{array}{lllll}
h_0&=& g c_1 +r&&\\
&&&&\\
h_j&=& \frac12 (q-g-1) c_1 +1 &&\text{for } j=1,2\\
&&&&\\
h_j&=& 1&&\text{for } 3\leq j\leq  c_1
\end{array}$$

Then $O_n=(h_0,\ldots,h_{c_1})$ is an option of $H_n$  since $h_1 = h_2$ are non-negative integers, since $q-g$ is odd.  Apply Lemma~\ref{lem:h0}, using $\grundy{g c_1+r}=g$ by induction and $\grundy{1}=0$.
\item If $q-g-1$ is odd,
let
$$\begin{array}{lllll}
h_0&=&g c_1+r&&\\
&&&&\\
h_j&=& \frac12 ((q-g-1) c_1 +1)&&\text{for } j=1,2\\
&&&&\\
h_j&=& 2 &&\text{for } j=3 \\
&&&&\\
h_j&=& 1 &&\text{for } 4\leq j \leq  c_1
\end{array}
$$

Apply Lemma~\ref{lem:h0} on $O_n=(h_0,\ldots,h_{c_1})$, using $\grundy{1}=\grundy{2}=0$, and that, by induction $\grundy{g c_1+r} = g$. (If $c_1=3$, then omit the last item.)
\end{enumerate}
\end{enumerate}

This proves that there is an option with \G-value $g$ for all $0\leq g < q$, and thus $\grundy{n} \geq q$.
Together with (\ref{eq:lea}), we obtain, for all $n$, $\grundy{n}=q$.
  \end{proof}

Next, we study partition games for which the players can split a heap into two, three or four heaps. In this case, even if the players are allowed to split a heap into more than four heaps, then the \G-sequence is purely arithmetic-periodic with period 1 and saltus 1.

\begin{prop}
\label{lem:123}
Let $\C$ be a (possibly infinite) ruleset with $\{1,2,3\}\subseteq \C$. Then, for all heaps $H_n$, $\grundy{n} = n-1$.
\end{prop}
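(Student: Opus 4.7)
The plan is to prove $\grundy{n}=n-1$ by strong induction on $n$, with base cases $n=1,2,3$ handled by direct computation (noting that a $c$-cut requires $n\ge c+1$, so the larger cut-numbers are simply unavailable at the smallest heap sizes). For the inductive step, assuming $\grundy{m}=m-1$ for all $m<n$, I establish the matching upper and lower bounds separately.

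For the upper bound, every option $O_n=(h_0,\ldots,h_c)$ of $H_n$ with $c\in\C$ satisfies, by the inductive hypothesis, $\grundy{O_n}=(h_0-1)\oplus\cdots\oplus(h_c-1)$. Lemma~\ref{lem:parity} bounds this nim-sum by $\sum_{i=0}^{c}(h_i-1)=n-(c+1)\le n-2$, using $c\ge 1$. Hence no option achieves $\G$-value $n-1$, giving $\grundy{n}\le n-1$.

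For the lower bound, I must exhibit, for every target $g\in\{0,1,\ldots,n-2\}$, an option of $H_n$ of $\G$-value exactly $g$. The idea is to isolate the heap $H_{g+1}$, whose inductive $\G$-value is $g$, and make the remaining heaps contribute nothing via Lemma~\ref{lem:h0}. The construction splits into three cases. If $g=n-2$, the $1$-cut $(g+1,1)$ has $\G$-value $(n-2)\oplus 0=g$. If $g\le n-3$ and $n-g$ is odd, the $2$-cut $\bigl(g+1,\tfrac{n-g-1}{2},\tfrac{n-g-1}{2}\bigr)$ has two equal heaps that cancel under nim-sum, giving $\G$-value $g$. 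If $g\le n-4$ and $n-g$ is even, the $3$-cut $\bigl(g+1,1,\tfrac{n-g-2}{2},\tfrac{n-g-2}{2}\bigr)$ again cancels via Lemma~\ref{lem:h0}. These cases together cover every $g\in\{0,\ldots,n-2\}$: the value $g=n-2$ is direct, $g=n-3$ falls in the odd branch (since $n-g=3$), and for $g\le n-4$ the parity of $n-g$ selects between the second and third constructions.

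The main obstacle is purely bookkeeping: verifying that each constructed tuple consists of positive integers summing to $n$ and that $\{1,2,3\}\subseteq\C$ legitimises each cut. In spirit the argument parallels Proposition~\ref{lem:k*}, but here $c_1=1$ forces an unbounded saltus equal to $1$, and the availability of both an even and an odd further cut-number ($2$ and $3$) is exactly what lets the pair-cancellation trick work uniformly for both parities of $n-g$.
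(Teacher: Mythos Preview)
Your proof is correct and follows essentially the same approach as the paper: the same induction, the same constructions $(g+1,\tfrac{n-g-1}{2},\tfrac{n-g-1}{2})$ and $(g+1,1,\tfrac{n-g-2}{2},\tfrac{n-g-2}{2})$ for the lower bound (you organise them by the parity of $n-g$ rather than a $2\times 2$ table, which is equivalent), and the same $1$-cut $(g+1,1)$ for $g=n-2$. For the upper bound you invoke Lemma~\ref{lem:parity} directly rather than Lemma~\ref{lem:nooptq}, but since the latter specialised to $p=1$ reduces to the former, the arguments coincide.
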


\begin{proof}
For base cases, $\grundy{1}=0$ and $\grundy{2}=1$.

Apply Lemma~\ref{lem:nooptq} with $p=1$, $r=1$, $n-1=q$. Hence there is no option of $n$ of \G-value $n-1$.

To prove that $\grundy{n} \ge n-1$, we must find an option $O_n$ with $\G(O_n)=g$, for all $g < n-1$.
The 1-cut $O_n = (1, n-1)$, satisfies $\grundy{O_n} = n-2$ by induction. Otherwise, let $g<n-2$, and apply Lemma~\ref{lem:h0}.
There are four cases depending on the parities of $n$ and $g\le n-3$. If $n$ and $g$ have the same parity, we use a 3-cut, and otherwise a 2-cut. In all cases, we may use that, by induction, $\grundy{O_n}=\grundy{g+1}=g$.

   \begin{center}

   {\renewcommand{\arraystretch}{2}%
         \begin{tabular}{|c|c|c|}
           \hline
           $O_n$ & $n$ odd & $n$ even \\
          \hline
          {$g$ odd} &$\left(g+1, 1,\frac{n-g-2}{2},\frac{n-g-2}{2}\right)$ & $\left(g+1, \frac{n-g-1}{2},\frac{n-g-1}{2}\right) $\\
           {$g$ even} &$\left(g+1,  \frac{n-g-1}{2},\frac{n-g-1}{2}\right)$ & $ \left(g+1,1, \frac{n-g-2}{2},\frac{n-g-2}{2}\right) $\\

                      \hline
         \end{tabular}}
         \end{center}

Altogether, $\grundy{n} = \mex(\{0,\ldots ,n-2\})=n-1$.
\end{proof}

Finally, we study finite partition games, namely the {\sc cut} class where a player can split a heap into 2, 4 or $2k+1$ heaps, where $k\ge 1$ is a given game parameter (note that this includes the game  \preG{1,2,3}). In this case, the Sprague-Grundy sequence is purely arithmetic-periodic with period $2k$ and saltus~2. For example, if $k=2$, then $\preG{1,3,4}=0,1,0,1,2,3,2,3,\ldots$ In general, we prove that $\grundy{n}=2\left\lfloor\frac{n}{2k}\right\rfloor+1-(n\bmod 2)$.

\begin{prop}
\label{lem:132k}
For any given $k \in \N$, let $\C=\{1,3,2k\}$. Then, $\preG{\C}=(0,1)^{k} ~(+2)$.
\end{prop}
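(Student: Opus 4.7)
The plan is to proceed by strong induction on $n$. Write $n=2kq+r$ with $1\le r\le 2k$, set $b=[r\text{ even}]$, and aim for $\G(n)=2q+b$. The base cases $n\in\{1,2\}$ give $\G(1)=0$ and $\G(2)=1$ by direct computation. For the inductive step, the structural key is that since $2k$ is even, $r_m\equiv m\pmod 2$ for every $m<n$, so the induction hypothesis yields $\G(m)=2q_m+[m\text{ even}]$: in binary, each $\G(m)$ is $q_m$ with the parity of $m$ appended as a low bit. Consequently, for any cut $(h_0,\ldots,h_c)$ of $n$,
\[
\bigoplus_{i=0}^{c}\G(h_i)\;=\;2\Bigl(\bigoplus_{i=0}^{c}q_i\Bigr)+\Bigl(\bigoplus_{i=0}^{c}b_i\Bigr),
\]
so the low bit and the higher bits can be analysed independently.

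For the upper bound $\G(n)\le 2q+b$, the goal is to show no option attains the value $2q+b$. By Lemma~\ref{lem:parity}, $\bigoplus b_i\equiv\#\{i:h_i\text{ even}\}\equiv(c+1)-n\pmod 2$. For $c\in\{1,3\}$ this evaluates to $n\bmod 2=1-b$, contradicting the low bit $b$ of the target; for $c=2k$ the parities align, but Lemma~\ref{lem:nooptq} applied with $p=2k$ forces $\bigoplus q_i\ne q$, so the higher bits cannot match either.

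For the lower bound, each $g'\in\{0,\ldots,2q+b-1\}$ will be hit by an explicit option constructed via Lemma~\ref{lem:h0}: a distinguished heap $h_0$ will carry the full $\G$-value $g'$, and the remaining parts will be arranged to XOR to $0$. Values $g'$ of low bit $1-b$ are covered by either the $1$-cut $(h_0,h_1)$ with $h_1$ odd in $[1,2k-1]$ (so $\G(h_1)=0$), or the $3$-cut $(h_0,a,a,c)$ with $c$ odd in $[1,2k-1]$ and $a\ge 1$; in both, $h_0=2kq'+r_0$ is picked with $r_0$ of parity \emph{opposite} to $r$ so that $\G(h_0)=g'$. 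Values $g'$ of low bit $b$ (necessarily with $q'\le q-1$) are covered by the $2k$-cut $(h_0,a_1,a_1,\ldots,a_k,a_k)$, where $r_0$ now has the \emph{same} parity as $r$ and $a_1,\ldots,a_k\ge 1$ sum to $(n-h_0)/2$; taking $r_0\in\{1,2\}$ of the correct parity ensures $n-h_0\ge 2k$, hence the $a_j$ exist.

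The main obstacle will be the boundary analysis in the lower bound. For $q'\in\{q-1,q\}$ combined with small $r$, the $3$-cut becomes too cramped to fit four positive parts respecting the divisibility constraints, and one must fall back to the $1$-cut, verifying by direct computation that the residual heap $h_1=n-h_0$ still lands in $\{1,3,\ldots,2k-1\}$. Symmetric care is needed when choosing $r_0$ in the $2k$-cut so that $(n-h_0)/2\ge k$. These verifications are routine algebra on the residues $r,r_0\in[1,2k]$ and the quotients $q,q'$, but they are precisely where a careless plan would leave a gap; once they are done, $\mex$ over all constructed options delivers $\G(n)=2q+b$ and closes the induction.
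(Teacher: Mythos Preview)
Your proposal is correct and follows essentially the same approach as the paper: strong induction, upper bound via the parity/low-bit argument for $c\in\{1,3\}$ and Lemma~\ref{lem:nooptq} for $c=2k$, and lower bound via explicit $1$-, $3$-, and $2k$-cuts with paired heaps that cancel under $\oplus$. Your bit-decomposition framing $\G(m)=2q_m+[m\text{ even}]$ makes the upper bound cleaner than the paper's case split, and your constructions are mild generalisations of the paper's specific formulas (e.g.\ the paper's $2k$-cut $(2kx+r,\,q-x,\ldots,q-x)$ is your $(h_0,a_1,a_1,\ldots,a_k,a_k)$ with all $a_j$ equal); the boundary checks you flag do indeed go through with the choices $r_0\in\{1,2\}$ and the fallback to the $1$-cut when $n-h_0<3$.
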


\begin{proof}
Given $k \in \N$, we must show that the period is $p=2k$. Hence, for any heap $H_n$, define integers $q,r$ by $n = 2kq+r$, $0 < r\le 2k$.

More precisely, we prove that, for all heaps $H_n$, $\grundy{n} = 2q +1 -(r\bmod 2)$.

Note that (for any $k$), $\grundy{1}=0$ ($q=0$ and $r=1$) and $\grundy{2}=1$ ($q=0, r=2$). Consider $n\geq 3$.


We show first that $\grundy{n}\le 2q+1-(r\bmod{2})$.

If $\grundy{n} > 2q+1-r\pmod{2}$, then $H_n$ must have an option $O_n$ with $c\in \C$ such that $\grundy{O_n}=2q+1-(r\bmod 2)$.

In case $c=2k$, assume that there is a $c$-cut $O_n=(n_{q_0}, \ldots, n_{q_c})$, with
$$ 2kq+r=n=\sum_{i=0}^c (2kq_i+r_i)=2k\sum_{i=0}^c q_i +\sum_{i=0}^c r_i .$$
Then Lemma~\ref{lem:nooptq} applies with $p = 2k$, so  $q \ne q_0 \oplus \cdots \oplus q_c$. Now,

\begin{align}
2q+(1-r\bmod 2) &= \grundy{O_n}\\
&=\bigoplus_{i=0}^c\grundy{n_{q_i}}\label{eq:middle}\\
&=\bigoplus_{i=0}^c (2q_i+(1-r_i\bmod 2))\\
&= 2\bigoplus_{i=0}^c q_i + \bigoplus_{i=0}^c(1-r_i\bmod2),\label{eq:last}
\end{align}

The  equality (\ref{eq:middle}) holds by induction and (\ref{eq:last}) holds since 2 is a power of 2, and since, for all $i$, $(r_i\bmod 2)<2$. By combining  $q \ne q_0 \oplus \cdots \oplus q_c$ with $r\bmod 2=\bigoplus_{i=0}^cr_i\bmod2$, this gives a contradiction. 

Therefore assume $c\in \{1,3\}$. Then, by (\ref{eq:last})
\begin{align}
1-r \bmod 2 &= \bigoplus_{i=0}^c (1-r_i \bmod 2)\\
&= \left(\bigoplus_{i=0}^c r_i \bmod 2\right)\label{eq:modd}\\
&= r\bmod 2 \label{eq:modb}
\end{align}

Equality (\ref{eq:modd}) holds since $c$ is odd, and (\ref{eq:modb}) holds by $\sum_{i=0}^c r_i  = r$ , a contradiction.

Thus, there is no option of $H_n$ with \G-value $2q+(1-r\bmod{2})$. 

We now prove that, from a heap $H_n$ of $n=2kq+r$ counters, there is an option of any \G-value smaller than $2q+(1-r \bmod 2)$.  Lemma~\ref{lem:h0} applies, but we give full details since the desired options vary a bit depending on various parities. There are two cases, depending on the parity of $r$.

\begin{enumerate}
\item If $r$ is odd, then $2q+(1-r\bmod{2}) = 2q$, and
\begin{enumerate}[label*=\arabic*]
\item for each integer $x\in [0,q-1]$,
$$ O_{n}=(2kx+r,q-x,\ldots,q-x)$$
is an option of $H_n$, obtained by a $2k$-cut. By induction, $\grundy{O_{n}}=2x$, which gives the even \G-values in $[0, 2q-2]$.
\item if $r=1$, for each integer $x\in [1,q-1]$,
$$ O_n =(2kx, 1, (q-x)k, (q-x)k)$$
is an option of $H_n$, obtained by a $3$-cut. By induction, $\grundy{O_n}=2x-1$, which gives the odd \G-values in $[1,2q-3]$, and the value $2q-1$ is obtained by the option $O_n = (2kq,1)$.
\item if $r\ge 3$, for each integer $x\in [0,q-1]$,
$$ O_n =(2kx+r-1, 1, (q-x)k, (q-x)k)$$
is an option of $H_n$, obtained by a $3$-cut. By induction, $\grundy{O_n}=2x+(r\bmod 2)=2x+1$ since $r$ is odd, which gives the odd \G-values in $[1,2q-1]$.
\end{enumerate}
Altogether, this implies $\grundy{n}\geq 2q$, if $r$ is odd.
\item If $r$ is even, then $2q+(1-r\bmod{2})=2q+1$, and
\begin{enumerate}[label*=\arabic*]
\item for each integer $x\in [0,q-1]$,
$$ O_{n}=(2kx+r+1,q-x,\ldots,q-x)$$
is an option of $H_n$, obtained by a $2k$-cut. By induction, $\grundy{O_{n}}=2x+1$, which gives the odd \G-values in $[1,2q-1]$.
\item for each integer $x\in [0,q-1]$,
$$ O_n =(2kx+r, 1, (q-x)k, (q-x)k)$$
is an option of $H_n$ obtained by a $3$-cut. By induction, $\grundy{O_n} = 2x$, which gives the even \G-values in $[0,2q-2]$, and the value $2q$ is obtained by the option $O_n = (2kq+r,1)$.
\end{enumerate}
Thus $\grundy{n}\geq 2q+1$, for even $r$.
\end{enumerate}
Hence, for all heaps $H_n$, $\grundy n = 2q+1-(r\bmod{2})$.
\end{proof}

Note that when $k=1$, Proposition~\ref{lem:132k} gives the same result as Proposition~\ref{lem:123} when $k = 3$ (and as such, $\C=\{1,2,3\}$).

The above results cover a large range of partition games, but in remaining cases we were not able to have direct proofs. Yet, many of them seem to be well-behaved. The next section is devoted to building a test that allows us to prove (using a small number of computations) that a given game is purely arithmetic-periodic. We then use this test to prove that some games have a purely arithmetic-periodic Sprague-Grundy sequence.

\section{An arithmetic-periodicity test for partition games}\label{sec:aptsg}

The purpose of this section is to provide 
an explicit tool to verify if a partitioning game is purely arithmetic-periodic by computing a small number of initial \G-values. Similar results are known as the {\sc subtraction}, {\sc octal} and {\sc hexadecimal} periodicity tests (see Section~\ref{sec:regtbg}, Theorem~\ref{thm:octal} and~\cite{Hexa}), in the latter case concerning pure arithmetic-periodicity. Recall that for {\sc octal}, the number of computations to prove the periodicity is in the range of twice the period, whilst it takes at least 7 times the period to prove the arithmetic-periodicity of {\sc hexadecimal} (together with a couple of additional tests). For (finite) {\sc subtraction}, the range of computations is given by the sum of the period and the highest number in the subtraction set. We review this development in Section~\ref{sec:background}.

In Section~\ref{sec:APT}, we prove that computing at most the first $4p$ values of the $\mathcal{G}$-sequence (where $p$ is the expected period, which should be determined by a blind computation) is enough to prove pure arithmetic-periodicity. We will also show that in some cases (depending on $\C$), the first $3p$ values are even sufficient (Section~\ref{sec:relaxedAP}).

\subsection{The AP-test}\label{sec:APT}

In this section, we describe a test that will be used to verify if a given partition game, on a finite ruleset, is purely arithmetic-periodic.

\begin{definition}[Arithmetic-Periodic Test]\label{def:APT}
  Consider a partition game  on a finite ruleset $\C$, and a single heap $H_n$, with $n\in \N$. Suppose there exists a smallest positive integer $p$ and a power of two $s=2^t$, $t\in \N_0$, with $1\le s\le p$, such that
  \begin{enumerate}[label=AP\arabic*:]
  \item if $n\leq 3p$ then $\grundy{n+p}=\grundy{n}+s$,
  \item $\{\mathcal{G}(n)\mid n\le p\}=\{0,1,\ldots, s-1\}$, and
  \item if $n\in[3p+1,4p]$ then $\forall g\in[0,s-1]$ $\exists$ $c$-cut $O_n$ with $2\le c\in \C$ such that $\grundy{O_n} = g$.
  \end{enumerate}
  Then the Arithmetic-Periodicity Test is $(p,t)$-verified for ruleset $\C$, and we write $\AP{\C}=(p,t)$.
\end{definition}
The first two conditions are rather standard to prove the periodicity of {\sc take-and-break} games; similar conditions are required in the {\sc subtraction} periodicity, {\sc octal} periodicity and {\sc hexadecimal} arithmetic-periodicity tests (see Section~\ref{sec:background} for more discussion on this). However, contrary to those (when applicable), we require the saltus to be a power of two in order to prove arithmetic-periodicity. The third condition seems more unusual, and is used for the base case of the proof. We will see in the next subsection that, for some rulesets, the third condition AP3 can be directly deduced from AP1 and AP2, which suggests that there could be a periodicity test closer to the existing tests for subtraction, octal and hexadecimal games. We now state the main result of this section.
\begin{theorem}\label{lem:BigFatLemma}
Suppose that $\AP{\C}=(p,t)$, where $\max\C\le 4p$. 
Then \preG{\C} is arithmetic-periodic with period $p$ and saltus $2^t$, \emph{i.e.} for all heaps $H_n$, $\grundy{n+p}=\grundy{n}+2^t$.
\end{theorem}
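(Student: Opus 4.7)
The plan is to prove $\mathcal{G}(n+p) = \mathcal{G}(n) + 2^t$ by strong induction on $n \ge 1$, with AP1 providing the base step for $n \le 3p$. Combining AP1 with AP2 yields the auxiliary formula $\mathcal{G}(m) = 2^t q_m + \mathcal{G}(r_m)$, where $\mathcal{G}(r_m) \in [0, 2^t-1]$, valid for every $m$ reached by the induction.

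The pivotal algebraic remark is that, because $s = 2^t$ is a power of two, the Grundy value of any already-covered disjunctive sum $(h_0, \ldots, h_c)$ decomposes cleanly as $\mathcal{G}(h_0, \ldots, h_c) = 2^t Q + R$, where $Q = \bigoplus_i q_{h_i}$ and $R = \bigoplus_i \mathcal{G}(r_{h_i}) < 2^t$; the low $t$ bits and the higher bits of the nim-sum do not interact. Writing the target $\mathcal{G}(n)+2^t = 2^t(q_n+1) + \mathcal{G}(r_n)$ in the same form, the inductive step reduces to two statements about the pairs $(Q, R)$ that can be realized by options of $H_{n+p}$: (i) $(q_n+1,\mathcal{G}(r_n))$ is \emph{not} realizable, and (ii) every lexicographically smaller pair \emph{is} realizable.

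For (i), I would transform a hypothetical such option into an option of $H_n$ by subtracting $p$ from a heap $h_j$ with $q_{h_j}\ge 1$, then combine a parity argument on the residue identity $\sum r_{h_i}\equiv r_n \pmod{p}$ with a Lemma~\ref{lem:nooptq}-style count to contradict the mex-definition of $\mathcal{G}(n)$. For (ii), pairs with $Q\le q_n$ are produced by taking an option of $H_n$ of Grundy $2^tQ+R$ (which exists by mex) and inflating one of its heaps by $+p$, selecting the heap so that its $q$-value has the parity that turns the arithmetic $+2^t$ into an XOR $\oplus 2^t$; pairs with $Q=q_n+1$ and $R<\mathcal{G}(r_n)$ are built from an AP3-witness cut at a heap size in $[3p+1,4p]$ realizing the low part $R$, together with auxiliary heaps contributing the required $Q$. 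The bound $\max\C\le 4p$ is used precisely to keep all such AP3-witness cuts inside the range already handled by the induction. The main obstacle is the arithmetic-versus-XOR mismatch in the shift $h_j\mapsto h_j+p$, which changes the high XOR contribution by $q_{h_j}\oplus(q_{h_j}+1)$---often a long carry chain---rather than by a clean $1$; resolving this relies on the freedom to choose the inflated heap by $q$-parity, or, in edge cases, on simultaneously shifting two heaps (one by $+p$ and one by $-p$) so that the net high-bits XOR change is exactly $1$.
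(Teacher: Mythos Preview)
Your decomposition $\mathcal{G}(h_0,\ldots,h_c)=2^tQ+R$ with $Q=\bigoplus q_{h_i}$ and $R=\bigoplus\mathcal{G}(r_{h_i})$ is exactly the right engine (this is Lemma~\ref{cor:prop14}), and the overall shape of the induction is correct. But step (ii) does not go through as written.

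The trouble is the inflate-by-$+p$ move. Adding $p$ to heap $h_j$ changes the high-part XOR by $q_{h_j}\oplus(q_{h_j}+1)$, which equals $1$ only when $q_{h_j}$ is even. Nothing guarantees that an option of $H_n$ realising a prescribed Grundy value contains a heap with even $q$-part; for instance a $1$-cut $(p+r_0,\,p+r_1)$ has $q_0=q_1=1$, and inflating either heap sends the $Q$-part from $0$ to $3$, not to $1$. Your fallback of ``simultaneously shifting two heaps, one by $+p$ and one by $-p$'' leaves the total unchanged, so it produces another option of $H_n$, not of $H_{n+p}$. If you instead mean a three-heap manoeuvre (two $+p$'s and one $-p$, or one $+p$ followed by a $\pm p$ correction), you need the option to have at least three heaps, i.e.\ $c\ge 2$, and you never establish that options of this kind exist for the Grundy values you need.

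This is precisely the missing inductive load. The paper carries, alongside the formula (A), an auxiliary statement (B): every Grundy value in $[0,(q-1)s-1]$ is realised by an option of $H_n$ with $c\ge 2$. With three heaps available one can \emph{rebuild} the $q$-parts rather than nudge them: put the whole target XOR $N$ into $h_0$, and park the leftover sum $(S-N)$ symmetrically in $h_1,h_2$ so that it cancels in the nim-sum. Because $S-N$ is always even (Lemma~\ref{lem:parity}), the natural step size is $2p$, not $p$; this is why the paper transports options between $H_{n-2p}$ and $H_n$. Condition AP3 is exactly what seeds property~(B) (via Lemma~\ref{lem:3p5p}), and the bound $\max\C\le 4p$ is what keeps the AP3 witnesses inside the inductive range. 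Your sketch uses AP3 only for the thin top slice $Q=q_n+1$; in the paper it is doing more work, anchoring the whole $c\ge 2$ mechanism.

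For (i) your idea is salvageable, but the reference to ``a Lemma~\ref{lem:nooptq}-style count'' is not the right tool (that lemma needs $c\ge p$). The clean observation is that if an option of $H_{n+p}$ had Grundy value $2^t(q_n{+}1)+\mathcal{G}(r_n)$, then $\bigoplus q_i=q_n{+}1$, and the residue bound $c{+}1\le\sum r_i\le r_n\le p$ forces $\sum q_i=\bigoplus q_i$; hence all the $q$-mass can be collapsed into a single heap to give an option of $H_{n-p}$ (or $H_{n-2p}$) with Grundy value $\mathcal{G}(n-p)$, contradicting mex.
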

In other words, if a partitioning game verifies the AP-test, then it is purely arithmetic-periodic. Note that both in the definition of the AP-test as well as in the main theorem, the saltus of the \G-sequence is a power of 2. The bound $\max\C\le 4p$ ensures that AP3 applies as a base case for induction.

In order to prove this result, we use a couple of lemmas. As before, we will frequently make use of the fact that for every heap $H_n$, and a given period $p$, there exists a unique pair $(q,r)=(q_n,r_n)$ such that $n=pq +r$ with $q\in \N_0$ and $0< r\le p$. The next result gives the closed formula corresponding to a purely arithmetic-periodic \G-sequence. (It applies to {\sc cut}, but also any other ruleset.)

\begin{lemma}\label{lem:method}
  Consider any heap game. Suppose there exists a period $p$, a saltus $s$, with, for all $n > p$, $\grundy{n}=\grundy{n-p}+s$.
  Then, for all $n>0$, $\grundy{n}=sq_n+\grundy{r_n}$.
\end{lemma}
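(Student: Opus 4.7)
The plan is to prove this by straightforward induction on $q_n$, using the fact that the unique decomposition $n=pq_n+r_n$ with $0<r_n\le p$ behaves predictably under subtraction by $p$.

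First I would handle the base case $q_n=0$, which corresponds to $0<n\le p$. In this range the decomposition forces $r_n=n$, so the claimed formula reduces to $\grundy{n}=s\cdot 0+\grundy{n}=\grundy{r_n}$, which is a tautology. No hypothesis is needed here beyond the definition of $(q_n,r_n)$.

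For the inductive step, assume $n>p$ and that the formula holds for all smaller positive integers. The key observation is the pair of identities $q_{n-p}=q_n-1$ and $r_{n-p}=r_n$, which follow immediately from writing $n-p=p(q_n-1)+r_n$ and noting that $0<r_n\le p$ already certifies the decomposition is the unique one. By the standing hypothesis on the game, $\grundy{n}=\grundy{n-p}+s$, and by the induction hypothesis applied to $n-p>0$, we get $\grundy{n-p}=s\,q_{n-p}+\grundy{r_{n-p}}=s(q_n-1)+\grundy{r_n}$. Combining these two equalities yields $\grundy{n}=s(q_n-1)+\grundy{r_n}+s=s\,q_n+\grundy{r_n}$, exactly as required.

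There is no real obstacle here; the lemma is essentially a bookkeeping statement translating an arithmetic-periodic recurrence into a closed form. The only thing to be careful about is the convention $0<r\le p$ (rather than $0\le r<p$), which the authors adopted because $H_1$ is terminal; this convention is precisely what makes $q_{n-p}=q_n-1$ hold for every $n>p$ without a boundary case at multiples of $p$.
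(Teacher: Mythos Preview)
Your proof is correct and follows essentially the same approach as the paper: induction with base case $0<n\le p$ (where $q=0$, $r=n$), and inductive step combining the recurrence $\grundy{n}=\grundy{n-p}+s$ with the induction hypothesis applied to $n-p=p(q-1)+r$. Your version is slightly more explicit in spelling out $q_{n-p}=q_n-1$ and $r_{n-p}=r_n$, but the argument is identical.
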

\begin{proof}
  For all $1\leq n\leq p$, $n=r$ with $q=0$, and hence, as a base case, $\grundy{n}=\grundy{r}$.

  Let $n=pq+r > p$. 
  The \G-value of $H_n$ is
  \begin{align}
  \grundy{n}&=\grundy{n-p}+s\notag\\
  &=\grundy{p(q-1)+r}+s\notag\\
  &=s(q-1)+\grundy{r}+s\label{eq:ind}\\
  &=sq+\grundy{r},\notag
  \end{align}
  where \eqref{eq:ind} is by induction.
\end{proof}
One nice consequence of Lemma~\ref{lem:method} is that the \G-values decompose, provided that the saltus is a power of 2, and this simple result outlines our approach. 
\begin{lemma}\label{cor:prop14}
Consider a ruleset $\C$ and a disjunctive sum of heaps $H = (h_0,\dots, h_c)$.
Suppose that Lemma~\ref{lem:method} is satisfied, with $h_i\leq n$, for all $0\leq i\leq c$, and with the two additional constraints
\begin{itemize}
\item $s=2^t$, $t\in \N_0$,
\item $\grundy{n}<s$ for all $1\leq n \leq p$.
\end{itemize}
Then
\begin{equation}
\grundy{H}=(q_0\oplus\dots\oplus q_c)s+\grundy{r_0}\oplus\dots\oplus\grundy{r_c},
\label{etoile}
\end{equation}
where for all $i$, $(q_i,r_i)=(q_{h_i},r_{h_i})$.
\end{lemma}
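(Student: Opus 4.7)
The plan is to reduce the claim to an elementary statement about binary representations, using only Sprague--Grundy additivity together with Lemma~\ref{lem:method}.

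First I would apply Sprague--Grundy theory to rewrite the left-hand side as
\[
\grundy{H}=\grundy{h_0}\oplus\cdots\oplus\grundy{h_c}.
\]
By Lemma~\ref{lem:method}, each summand satisfies $\grundy{h_i}=sq_i+\grundy{r_i}$, with $s=2^t$. Substituting, the goal becomes the identity
\[
\bigoplus_{i=0}^{c}\bigl(sq_i+\grundy{r_i}\bigr)\;=\;s\Bigl(\bigoplus_{i=0}^{c} q_i\Bigr)+\bigoplus_{i=0}^{c}\grundy{r_i},
\]
which is a purely arithmetic/binary claim about integers.

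The key step is then the bit-decomposition argument. Because $s=2^t$ and $\grundy{r_i}<s$ (by the second additional hypothesis applied to $r_i\in[1,p]$), the number $sq_i+\grundy{r_i}$ has no carry between its low-order block of $t$ bits, which encodes $\grundy{r_i}$, and its high-order bits, which encode $q_i$ shifted by $t$. Since XOR acts independently bit-by-bit, the low $t$ bits of the nim-sum depend only on the $\grundy{r_i}$ and contribute $\bigoplus_i\grundy{r_i}$, while the bits of weight $\ge 2^t$ depend only on the $q_i$ and contribute $s\bigoplus_i q_i$. The second hypothesis also ensures $\bigoplus_i\grundy{r_i}<s$, so the two contributions again do not interact and add rather than overlap. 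Writing this out formally using $a=\sum_j e_j(a)2^j$ as in Lemma~\ref{lem:parity} reduces the whole identity to the trivial fact that XOR commutes with a uniform bit-shift.

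I do not expect a real obstacle here: the only thing one has to watch is that no carry or overlap occurs when combining the $q_i$-part and the $\grundy{r_i}$-part, which is exactly what the hypotheses $s=2^t$ and $\grundy{n}<s$ for $n\le p$ guarantee. The remainder is a direct computation, after which \eqref{etoile} follows.
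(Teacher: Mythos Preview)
Your proposal is correct and follows essentially the same route as the paper: apply Sprague--Grundy additivity, then Lemma~\ref{lem:method}, then use the two constraints to split the nim-sum into high and low bit-blocks. The paper compresses your bit-decomposition paragraph into the single remark that the last equality ``is by the two constraints,'' but the underlying argument is identical.
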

\begin{proof}
\begin{align}
\grundy{H}&=\grundy{q_0p+r_0,\ldots , q_cp+r_c}\\
&=\grundy{q_0p+r_0}\oplus\dots\oplus\grundy{q_cp+r_c}\label{eq:ett}\\
&=(q_0s+\grundy{r_0})\oplus\dots\oplus (q_cs+\grundy{r_0})\label{eq:tva}\\
&=(q_0\oplus\dots\oplus q_c)s+\grundy{r_0}\oplus\dots\oplus\grundy{r_c}\label{eq:tre},
\end{align}
where \eqref{eq:ett} is by Sprague-Grundy theory, \eqref{eq:tva} is by Lemma~\ref{lem:method}, and (\ref{eq:tre}) is by the two constraints.
\end{proof}

Lemma~\ref{cor:prop14} becomes useful in proving that the AP-test implies arithmetic-periodicity by providing invariance of \G-values along arithmetic progressions of heap sizes. Namely the remainder part of the heap sizes in specified options can remain the same, whereas selected parts of the dividend part can be cancelled out, in a similar sense as in Lemma~\ref{lem:h0}. This observation will appear a couple of times in the proof of Theorem~\ref{lem:BigFatLemma}. This theorem will be proved by induction, with a rather technical base case, which will make use of the condition $ c_k\geq 2$ in AP3. We consider a part of this base case in the following lemma.

\begin{lemma}\label{lem:3p5p}
  Let $\C=\{ c_1,\ldots, c_k\}$ be a ruleset with $ c_k\geq 2$ such that $\AP{\C}=(p,t)$, with $s=2^t$, $t\in\N_0$.
  Then for each $H_n$ with
  \begin{enumerate}
    \item $n\in [3p+1,4p]$ and for each \G-value $g\in [ 0, 2s-1]$, there is an option $O_n = (h_0,\dots,h_c)$, with $c\geq 2$ such that $\grundy{O_n}=g$,
  \item $n\in [2p+1,3p]$ and for all \G-values $g\in [ 0, s-1]$, there is an option $O_n = (h_0,\dots,h_c)$,  with $c\geq 2$ such that $\grundy{O_n}=g$.
  \end{enumerate}
\end{lemma}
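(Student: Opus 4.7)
The plan is to reduce the claim to a combinatorial problem via Lemma~\ref{cor:prop14}.  Iterating AP1 three times starting from AP2 gives $\grundy{h}=s\,q_h+\grundy{r_h}$ for every $h\in[1,4p]$, where $(q_h,r_h)$ is the division of $h$ by $p$ with $r_h\in[1,p]$.  Since both ranges for $n$ in the statement satisfy $n\le 4p$, every option of $n$ has all parts in $[1,4p]$, and Lemma~\ref{cor:prop14} yields
\[
\grundy{h_0,\ldots,h_c}=s\,\bigl(q_{h_0}\oplus\cdots\oplus q_{h_c}\bigr)+\bigl(\grundy{r_{h_0}}\oplus\cdots\oplus\grundy{r_{h_c}}\bigr).
\]
Producing an option with target \G-value $s\alpha+g'$ (with $\alpha\in\{0,1\}$ and $g'\in[0,s-1]$) therefore amounts to choosing heaps in $[1,4p]$ summing to $n$ so that the two nim-sums in the formula equal $\alpha$ and $g'$ respectively, for some $c\in\C$ with $c\ge 2$.

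For part~(1), the range $g\in[0,s-1]$ is exactly AP3.  For $g=s+g'\in[s,2s-1]$, I would pick $r^*\in[1,p]$ with $\grundy{r^*}=g'$ (by AP2) and try the $c_k$-cut
\[
O_n=\bigl(p+r^*,\,p+a,\,p+a,\,h_3,\ldots,h_{c_k}\bigr),
\]
with the tail heaps $h_3,\ldots,h_{c_k}$ in $[1,p]$ and chosen so that their $\grundy{r_{h_i}}$-values nim-sum to $0$.  The first three heaps then supply $q$-nim-sum $1\oplus 1\oplus 1=1$ and remainder-\G-nim-sum $g'$, while the tail contributes $0$ to both, giving the target $s+g'$.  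For part~(2), the parallel ansatz $(r^*,p+a,p+a,h_3,\ldots,h_{c_k})$ with $\grundy{r^*}=g$ gives $q$-nim-sum $0$ and remainder-\G-nim-sum $g$, hence target \G-value $g$.

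The parameter $a$ and the tail are forced by the sum constraint $\sum h_i=n$ and the tail-cancellation constraint, and are parity-sensitive.  When the direct choice $h_3=\cdots=h_{c_k}=1$ fails to yield an $a$ in $[1,p]$, several perturbations are available: inserting an auxiliary heap drawn from $\{r\in[1,p]:\grundy{r}=0\}$ (nonempty by AP2) shifts the sum while preserving both nim-sums; varying $r^*$ among the preimages of $g$ or $g'$ under $\G$ changes parity whenever such preimages are non-unique; and splitting the pair $(p+a,p+a)$ as $(p+a',p+a'')$ with $a'\neq a''$ but $\grundy{a'}=\grundy{a''}$ adjusts the total without disturbing either nim-sum.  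The hypothesis $c_k\ge 2$ guarantees that at least the three-heap base cut belongs to $\C$.

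The main obstacle is the narrow case where $c_k=2$ (no padding with ones is possible) combined with each \G-value in $[0,s-1]$ having a unique preimage in $[1,p]$ (no freedom to change $r^*$).  Then the only available ansatz is a $2$-cut of the shape $(r^*,p+a,p+b)$ with $a,b\in[1,p]$, $a+b=n-r^*-2p$, and $\grundy{a}=\grundy{b}$.  The existence of such distinct pairs follows from $p>s$; the boundary case $p=s$, in which $\G$ restricted to $[1,p]$ is a bijection, must be checked directly.  Throughout one must verify that every constructed heap remains in $[1,4p]$, so that Lemma~\ref{cor:prop14} remains applicable.
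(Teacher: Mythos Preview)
Your approach is genuinely different from the paper's, and the constructive ansatz you propose has a real gap. For part~(1) with $g\in[s,2s-1]$, the paper does not build an option explicitly. Since AP1 and AP2 give $\grundy{n}=3s+\grundy{r_n}\ge 3s>g$, the mex rule already guarantees that \emph{some} option $O_n$ has $\grundy{O_n}=g$; the only thing left is to check that this option cannot be a $1$-cut. A short case analysis shows that for any $1$-cut $(h_0,h_1)$ of $n\in[3p+1,4p]$ the pair $(q_0,q_1)$ satisfies $q_0\oplus q_1\in\{0,2,3\}$, never $1$, so by Lemma~\ref{cor:prop14} $\grundy{h_0,h_1}\notin[s,2s-1]$. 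Part~(2) then piggybacks on part~(1): take the $c$-cut ($c\ge 2$) of $n'=n+p$ with value $g+s$ supplied by part~(1), and redistribute its $q$-parts (using that $S-N$ is even by Lemma~\ref{lem:parity}) to obtain a $c$-cut of $n$ with value $g$.

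Your ansatz $(p+r^*,\,p+a,\,p+a,\,h_3,\ldots,h_{c_k})$ for part~(1) and $(r^*,\,p+a,\,p+a,\ldots)$ for part~(2) cannot work near the bottom of each interval. In part~(1) the first three heaps alone sum to at least $3(p+1)=3p+3$, so for $n\in\{3p+1,3p+2\}$ no such option exists regardless of the tail; in part~(2) the first three heaps already sum to at least $1+2(p+1)=2p+3>2p+1$. The perturbations you list (inserting zero-\G\ heaps, varying $r^*$, unbalancing $(a,a)$ to $(a',a'')$ with $a',a''\ge 1$) do not lower the minimum achievable total, so they cannot rescue these cases. The obstacle is not the boundary subcase $c_k=2$, $p=s$ that you flagged; it is the $q$-configuration $(1,1,1)$ you committed to, which forces too large a sum. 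A constructive proof would need to switch to a $(1,0,0)$-type configuration for small $n-3p$ and then argue separately that the remainder-\G\ constraint $\grundy{r_0}\oplus\grundy{r_1}\oplus\grundy{r_2}=g'$ can always be met under the sum constraint $r_0+r_1+r_2=n-p$ --- which is precisely the difficulty the paper's indirect mex-plus-exclusion argument sidesteps.
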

\begin{proof}

\noindent Case 1: Let $H_n$ be such that $n=3p+r_n\in [3p+1,4p]$ and let $g\in [0,2s-1]$. AP3 implies that for each $g\in[0,s-1]$, there is an option $O_n$ such that $\grundy{O_n}=g$.

By the conditions AP1 and AP2, Lemma~\ref{lem:method} gives that $\grundy{n}=3s+\grundy{r_n}$ and hence, by the mex rule, for each $g\in [s,2s-1]$, there is an option $O_n$ such that $\grundy{O_n}=g$. Therefore, if $1\notin \C$, there is nothing to prove. Consequently, it suffices to prove that if $1\in \C$, and $O_n=(h_0,h_1)$ is an option of $n$ obtained by a $1$-cut, then $\grundy{O_n}\notin [s, 2s-1]$. 

    Assume $1\in \C$ and consider the 1-cut $O_n=(h_0, h_1)$. There exist four unique nonnegative integers $q_0,r_0,q_1,r_1$ such that $0 < r_0,r_1\le p$ and $O_n=(q_0p+r_0,q_1p+r_1)$. As $O_n$ is an option of $n$ we have
    $$ (q_0+q_1)p+r_0+r_1=n=3p+r$$
    which gives
    $$ r_0+r_1-r=(3-q_0-q_1)p.$$
    As $0\leq q_0+q_1\leq 3$ and $r_0+r_1\le 2p$, we have on one hand $0\leq r_0+r_1-r<2p$ and on the other hand $r_0+r_1-b\equiv 0 \pmod{p}$. Hence $r_0+r_1-b\in\{0,p\}$. If it equals $0$ then $q_0+q_1=3$, and otherwise $q_0+q_1=2$. Without loss of generality the possible values for $q_0$, $q_1$ and $q_0\oplus q_1$ are summarized in the following table:
         \begin{center}
         \begin{tabular}{|c|c|c|}
           \hline
           $q_0$ & $q_1$ & $q_0\oplus q_1$ \\
           \hline
           \multirow{2}{*}{$0$} & $2$ & $2$ \\
           \cline{2-3}
             & $3$ & $3$ \\
           \hline
           \multirow{2}{*}{$1$} & $1$ & $0$\\
           \cline{2-3}
            & $2$ & $3$ \\
           \hline
         \end{tabular}
         \end{center}
         In particular, $q_0\oplus q_1\ne 1$. And, by Lemma~\ref{cor:prop14}, property (\ref{etoile}) we have that $$\grundy{O_n}=(q_0\oplus q_1)s+\grundy{r_0}\oplus\grundy{r_1}\notin [s, 2s-1],$$ since $s$ is a power of two and $\grundy{r_0},\grundy{r_1} < s$.\\

       \noindent Case 2: Let $n\in [2p+1, 3p]$ and $g\in [0, s-1]$, and let $n'=n+p\in [3p+1,4p]$ and $g'=g+s\in [s, 2s-1]$.

         By the first part of the proof, we know that there is an option of $H_{n'}$, $$O_{n'}=(q_{0,n'}p+r_{0,n'},\dots,q_{m,n'}p+r_{m,n'})$$ such that $m\geq 2$ and $\grundy{O_{n'}}=g'$. Let $N=q_{0,n'}\oplus\dots\oplus q_{m,n'}$, $R=\grundy{r_{0,n'}}\oplus\dots\oplus\grundy{r_{m,n'}}$ and $S=q_{0,n'}+\dots+q_{m,n'}$. We apply Lemma~\ref{cor:prop14} to $O_{n'}$ and get $\grundy{O_{n'}}=Ns+R$. Moreover $N=1$, by $g'\in[s,2s-1]$.

         Define a disjunctive sum of heaps $h=(h_0,\ldots ,h_c)$ by
         $$\begin{array}{lllll}
           h_0&=&r_{0,n'}&&\\
           &&&&\\
           h_j&=&\frac{1}{2}(S-1)p+r_{j,n'}&&\text{ for }
           j=1,2\\
           &&&&\\
           h_j&=&r_{j,n'}&&\text{ for } 3\leq j\leq c
         \end{array}$$
         If $c=2$, ignore the third part. Note that $S-1\ge 0$ is even by Lemma~\ref{lem:parity}. The disjunctive sum $h$ is an option of $H_n$ since
         \begin{align*}
         h_0+\dots+ h_c&=(S-1)p+(r_{0,n'}+\dots+r_{c,n'})\\
         &=n'-p\\
         &=n
         \end{align*}
          Again, since $S-1$ is even,
          \begin{align*}
          \grundy{h} &= R\\
          &=\grundy{O_{n'}}-s\\
          &= g'-s\\
          &= g
          \end{align*}
          Hence, $h=O_n$ is indeed an option of $n$ with $c\geq 2$ and $\grundy{O_n}=g$.
\end{proof}

We can now prove Theorem~\ref{lem:BigFatLemma}: if a partition game verifies the AP-test, then its \G-sequence is purely arithmetic-periodic.

\begin{proof}[Proof of Theorem~\ref{lem:BigFatLemma}]

  Assume $\AP{\C}=(p,t)$, with $2^t=s$. Thus for $0\leq q<4$ where $n=qp+r\in [qp+1,(q+1)p]$, by Lemma~\ref{lem:method}, we have $\grundy{n}=\grundy{qp+r}=qs+\grundy{r}$. 

 We will now prove by induction that for any heap $H_n$, with $n=qp+r\geq 1$, $\AP{\C}=(p,t)$ implies the following two properties (where (A) implies the result, and where (B) is an auxiliary means for the induction). 

  \begin{enumerate}[label=(\Alph*)]
  \item $\grundy{n}=qs+\grundy{r}$ and
  \item for all \G-values $g\in [0, (q-1)s-1]$, there is an option $O_n = (h_0,\dots,h_c)$ such that $c\geq 2$ and $\grundy{O_n} = g$.
  \end{enumerate}

We have a base case.  By Lemma~\ref{lem:method}, (A) holds for all $n\leq 4p$. Moreover, by Lemma \ref{lem:3p5p}, (B) holds for $q=2,3$, and it is trivially true for $q\leq 1$. So (B) holds for $n\le 4p$.

For the induction step, let $n=qp+r>4p$. We will prove the result in two steps: first we show that $\grundy{n}\ge qs+\grundy{r}$ together with $(B)$, and then we show that $\grundy{n}\le qs+\grundy{r}$.

\begin{enumerate}
  \item Case $\grundy{n}\ge qs+\grundy{r}$:
   By induction, the heap of size $n'=n-2p=q'p+r'$ verifies conditions (A) and (B). Consider any \G-value $g<(q'-1)s$. Then, by (B), there is an option $O_{n'}=(q_{0,n'}p+r_{0,n'},\dots,q_{c,n'}p+r_{c,n'})$, with $c\ge 2$ and $\grundy{O_{n'}}=g$. Let $N = q_{0,n'}\oplus\dots\oplus q_{m,n'}$, $S = q_{0,n'}+\dots+q_{c,n'}$ and $R=\grundy{r_{0,n'}}\oplus\dots\oplus \grundy{r_{c,n'}}$. Define $O_{n}=(h_0,\ldots ,h_c)$ by
  $$\begin{array}{lllll}
  h_0 &= &Np+r_{0,n'}&&\\
  &&&&\\
  h_j &=& \frac{1}{2}(S-N+2)p+r_{j,n'} && \text{for } j=1,2\\
  &&&&\\
  h_j &= &r_{m,n'}&& \text{for } 2<j\le c
  \end{array}
  $$
If $c=2$, then omit the third part. This is an option of $H_n$ with at least two cuts since $h_0+\dots +h_c=(2+S)p+r_{0,n'}+\dots+r_{c,n'}$ and its \G-value is $\grundy{O_n}=Ns+R=g$,  by Lemma~\ref{cor:prop14} (since the contributions $(S-N+2)s/2$ for $j =1,2$ cancel out).

  Hence $H_n$ has options to all \G-values in $[0,(q'-1)s-1]$, \textit{i.e.} $\grundy{n}\geq (q-3)s$.\\
  
  We now change $O_n$ into $\hat O_n=(\hat h_0,\ldots ,\hat h_c)$ as follows:
  $$\begin{array}{lllll}
    \hat h_0 &=& q_{0,n'}p+r_{0,n'}+2p &&\\
    &&&&\\
    \hat h_j& =& q_{j,n'}p+r_{j,n'} && \text{for } 0<j\le c
  \end{array}$$
  This is an option of $H_n$ with a $c$-cut since $\hat h_0+\dots +\hat h_m=n'+2p$. Its \G-value is $\grundy{\hat O_n}=(N+2)s+R=g+2s$.

  Hence $H_n$ has options of \G-values in $[2s, (q-1)s-1]$. \\
  
  If $q>4$ then, by putting together the two cases, $H_n$ has options to all \G-values in $[0,(q-1)s-1]$, with $c\ge 2$ and $(B)$ holds.
  Otherwise, if $q=4$, then we take an option $O_{n'}=(q_{0,n'}p+r_{0,n'},\dots, q_{c,n'}p+r_{m,n'})$ of $n'=3p+r=n-p$ with \G-value $g\in [0,s-1]$ and $c\geq 2$, which exists by Lemma \ref{lem:3p5p}. We denote by $S=q_{0,n'}+\dots+	q_{c,n'}$, $N=q_{0,n'}\oplus \dots\oplus q_{c,n'}$ and $R=\grundy{r_{0,n'}}\oplus\dots\oplus\grundy{r_{c,n'}}$, and transform $O_{n'}$ into an option $O_n=(h_0,\dots,h_c)$ by setting
  $$\begin{array}{lllll}
  h_0&=&(N+1)p+r_{0,n'} &&\\
  &&&&\\
  h_j&=& \frac12 (S-N)p+r_{j,n'}&&\text{for } j=1,2\\
  &&&&\\
  h_j&=& r_{j,n'}&& \text{for } 3\leq j\leq c
  \end{array}
  $$
 This is an option of $H_n$ since $h_0+\dots +h_c=(S+1)p+r_{0,n'}+\dots+r_{c,n'}=n'+p=n$ and its \G-value is $\grundy{O_n}=\grundy{O_{n'}}+s=g+s$.

 This gives that for $q=4$, $H_n$ has options obtained by $c$-cuts, $c\geq 2$, to all \G-values in $[0,(q-1)s]=[0,3s]$.  Hence $H_n$ verifies (B) in every case.

  For the remaining part of the proof of (A), let $n'=n-(q-1)p=p+r$ and $g\in [0,s+\grundy{r}-1]$.

  Let  $O_{n'}=(q_{0,n'}p+r_{0,n'},\dots, q_{c,n'}p+r_{c,n'})$ be an option of $H_{n'}$ such that $\grundy{O_{n'}} = g$. It exists, since $H_{n'}$ verifies (B) by induction. Note that as $n'\leq 2p$, if there is a $j$ such that $q_{j,n'}\ne 0$, then it is unique. Hence, without loss of generality, assume that $q_{0,n'}\in\{0,1\}$ and for $j>0$, $q_{j,n'}=0$. Therefore, if $R=\grundy{r_{0,n'}}\oplus\dots\oplus\grundy{r_{m,n'}}$, then $\grundy{O_{n'}}=q_{0,n'}s+R$ by Lemma~\ref{cor:prop14}.

  Define $O_{n}=(h_0,\ldots , h_c)$ by
  $$\begin{array}{lllll}
    h_0&=&(q_{0,n'}+q-1)p+r_{0,n'} && \\
    &&&&\\
    h_j &=& r_{j,n''} & & \text{for } j>0
  \end{array}$$
  This is an option of $H_n$, since $h_0+\dots +h_c = (q_{0,n'}+q-1)p+r_{0,n'}+r_{1,n'}+\dots +r_{c,n'}=n'+(q-1)p=n$. Its \G-value is $\grundy{O_n}=(q_{0,n'}+q-1)s+R=g+(q-1)s$.
  Hence $H_n$ has options to all \G-values in $[(q-1)s, qs+\grundy{r}-1]$. Altogether, $H_n$ has options to all \G-values in $[0, qs+\grundy{r}-1]$.
  
  \item Case $\grundy{n}\le qs+\grundy{r}$: \\
  Assume, as a contradiction, that $O_n=(q_0p+r_0,\dots, q_cp+r_c)$ is an option of $H_n$ with \G-value $\grundy{O_n}=qs+\grundy{r}$, and where as usual $n=qp+r$.  
  Let $N=q_0\oplus\dots\oplus q_c$ and $R=\grundy{r_0}\oplus\dots\oplus \grundy{r_c}$. With this notation, $\grundy{O_n}=Ns+R$, by Lemma~\ref{cor:prop14}. Hence $q=N$ and

\begin{align}\label{eq:R}
\grundy{r}=R,
\end{align}
since $s=2^t$ and $\grundy{r_i},\grundy r <s$, by AP2.

  Define $O_{n'}=(h_0',\ldots ,h'_c)$ by
  $$\begin{array}{lllll}
  h_0'&=&(q-2)p+r_0 &&\\
  &&&&\\
  h_j'&=&r_j, && \text{for } j>1.
  \end{array}$$
  This is an option of $n'=n-2p$, and its \G-value is
  \begin{align}
  \grundy{O_{n'}}&=(q-2)s+R\label{eq:O}\\
  &=qs+\grundy{r}-2s\label{eq:q}\\
  &=\grundy{n'}\label{eq:n'},
  \end{align}
  where \eqref{eq:O} and \eqref{eq:n'} are by Lemma~\ref{cor:prop14}, and \eqref{eq:q} is by \eqref{eq:R},
  a contradiction.\\

  Putting together steps $(1)$ and $(2)$, the heap $H_n$ verifies (A).
    \end{enumerate}

\end{proof}

\subsection{Relaxed conditions on the AP-test}\label{sec:relaxedAP}

We now prove that for some families of games, the conditions AP1 and AP2 of the AP-test imply the condition AP3. We first prove that this is the case if the players are allowed to split a heap into at least one even number and one odd number of heaps. Again, Lemma~\ref{lem:h0} applies in the proof, but the situation gets a little technical, so we write out all details.

\begin{prop}\label{prop:1,2=>3}
  Let $\C$ be a ruleset with $|\C|\ge 2$. If \preG{\C} $(p,t)$-verifies AP1 and AP2, and there are cut numbers $c,c'\in \C$ of different parities with  $2\leq c,c' \leq 2p+1$, then $\AP{\C}=(p,t)$.
\end{prop}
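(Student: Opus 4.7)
The plan is to deduce AP3 directly from AP1 and AP2 by an explicit construction. Fix $n = 3p + r \in [3p+1, 4p]$ and a target Grundy value $g \in [0, s-1]$; I need to exhibit a $c$-cut $O_n$ with $c \geq 2$ and $\grundy{O_n} = g$. By AP2 there is a ``witness'' heap $r^* \in [1, p]$ with $\grundy{r^*} = g$, so the idea is to build $O_n$ in the style of Lemma~\ref{lem:h0}: keep $H_{r^*}$ as one heap of the option and arrange the other heaps so that their \G-values pair-cancel, leaving $\grundy{O_n} = \grundy{r^*} = g$.

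Let $\bar c, \tilde c \in \C$ denote the prescribed even and odd cut numbers in $[2, 2p+1]$, set $\delta := 3p + r - r^*$ (so $\delta \geq 2p + 1$ because $r \geq 1$ and $r^* \leq p$), and choose $c \in \{\bar c, \tilde c\}$ of the same parity as $\delta$, so that $\delta - c \geq 0$ is even. Setting $M := (\delta - c + 2)/2$, a positive integer by the parity choice together with $c \leq 2p+1 \leq \delta$, I propose the option
$$O_n \;=\; \bigl(r^*,\, M,\, M,\, \underbrace{1, \ldots, 1}_{c - 2 \text{ copies}}\bigr).$$
This is a legal $c$-cut of $H_n$, since it has $c + 1$ positive heaps summing to $r^* + 2M + (c - 2) = r^* + \delta = n$. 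Because $\grundy{1} = 0$ in $\cut$ (as $H_1$ is terminal) and the two copies of $H_M$ nim-cancel, Sprague-Grundy theory immediately gives $\grundy{O_n} = \grundy{r^*} = g$, as required. Together with AP1 and AP2 this establishes AP3, hence $\AP{\C} = (p, t)$.

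The essentially only technical point is ensuring that $M$ is a positive integer for every pair $(n, g)$, and this is precisely why the hypothesis demands both parities among the cut numbers and the upper bound $c \leq 2p + 1$: the parity hypothesis lets the case split on the parity of $\delta$ succeed, and the bound ensures $\delta - c \geq 0$. No induction on $n$ is needed, and Lemma~\ref{cor:prop14} is not invoked, because the construction reads off $\grundy{O_n}$ directly from the cancellation structure of the option. Beyond tracking the two parities cleanly, I do not foresee any obstacle.
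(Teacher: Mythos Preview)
Your proof is correct and follows the same overall plan as the paper: use AP2 to pick a witness heap $r^*$ with $\grundy{r^*}=g$, then fill out a $c$-cut of $H_n$ so that every other heap pair-cancels in the sense of Lemma~\ref{lem:h0}.

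The difference is in how the ``padding'' is built. The paper distributes the leftover $n'=n-r^*$ across $c$ heaps via the division algorithm $n'=cq'+r'$, which forces a two-level case split (parity of $n'$, then parity of $q'$ in the odd case) and uses heaps of size roughly $q'$. Your construction $(r^*,M,M,1,\dots,1)$ bypasses that entirely: you absorb almost all of $\delta=n-r^*$ into a single matched pair $H_M+H_M$ and pad with $H_1$'s, so the only thing to check is that $M=(\delta-c+2)/2$ is a positive integer, which you handle cleanly via the parity choice of $c$ and the bound $c\le 2p+1\le\delta$. This is strictly simpler than the paper's argument and makes the role of the hypothesis (one even and one odd cut number, each at most $2p+1$) completely transparent.
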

\begin{proof}
Let $s=2^t$. It suffices to prove that $G(\C)$ $(p,t)$-verifies AP3. We prove that for all heap sizes $n\in[3p+1, 4p]$ and all \G-values $g\in[0, s-1]$, there is an option $O_n = (h_0,\dots,h_c)$ such that $c\geq 2$ and $\grundy{O_n} = g$. As usual, write $n=3p+r$ with $0< r \le p$.

  By AP2, for any $g\in [0, s-1]$, there exists a $k\in [1, p]$ such that $\grundy{k} = g$. Let $n' = n-k = 3p+r-k$. The desired cuts will depend on the parity of $n'$.\\

  \noindent Case $n'$ even: let $c\ge 2$ be even, and let $(q', r')$ be the unique ordered pair such that $0 < r' \le c$ and  $n' = cq'+r'$. In particular, $r'$ is even, since $c$ and $n'$ are also even. Moreover $q'>0$ since $c\le 2p< n'$. We define $h=(h_0,\ldots ,h_c)$ by
    $$\begin{array}{lllll}
    h_0&=&k&&\\
    &&&&\\
    h_j&=&q+\frac{1}{2}r &&\text{for  }j=1,2\\
    &&&&\\
    h_j&=&q&&\text{for }3\leq j\leq c
  \end{array}$$
  (If $c=2$, then the third part does not apply.) Then $h=O_n$ is an option of $H_n$ since all $h_j\in\N$, and since $h_0+\dots+h_{c}=k+cq+r=k+n'=n$. Moreover, in the expression $\grundy{h_0}\oplus\dots\oplus\grundy{h_{c}}$, the values  $\grundy{h_1}$ and $\grundy{h_3}$ appear an even number of times, which gives directly $\grundy{O_n}=\grundy{k}=g$.\\

  \noindent Case $n'$ odd: let $c\ge 3$ be odd and let $(q,r)$ be such that $0\leq r<c$, $n' = cq'+r'$. Note that $q'>0$ since $c\le 2p < n'$. As $n'$ and $c$ are odd, either $q'$ is even and $r'$ is odd or vice versa.
    \begin{itemize}
    \item if $q'$ is even and $r'$ is odd, define the option $O_n=(h_0,\ldots ,h_c)$ by:
      $$\begin{array}{lllll}
      h_0 &=& 1+k&&\\
      &&&&\\
      h_j&=&\frac{3}{2}q'+\frac12(r'-1) &&\text{for } j=1,2\\
      &&&&\\
      h_j&=&1 &&\text{for } j=3\\
      &&&&\\
      h_{j}&=&q'&& \text{for } 4\leq j\leq c
    \end{array}
      $$
      If $c = 3$ then we only take the four first heaps.
    The option $O_n$ is an option of $n$ since $h_0+\dots+h_{c}=1+c+3q'+r'-1+1+(c-1-2)q'=1+k+q'_2+r'_2=1+c+n'=n$. In the expression $\grundy{h_0}\oplus\dots\oplus\grundy{h_{c}}$ the terms $\grundy{h_1}$ and $\grundy{h_4}$ appear an even number of times and $\grundy{h_{3}}=0$, and hence $\grundy{O_n}=1+k=g$.
    \item if $q'$ is odd and $r'$ is even, we define the option $O_n$ by:
      $$\begin{array}{lllll}
      h_0&=&1+k&&\\
      &&&&\\
      h_j&=&\frac12(3q'-1)+\frac12r' &&\text{ for } j=1,2\\
      &&&&\\
      h_{j}&=&1&&\text{ for } j=3\\
      &&&&\\
      h_j&=&q' &&\text{ for } 4\leq j\leq c
    \end{array}$$
      it is an option of $n$ since $h_0+\dots+h_{c}=1+k+3q'-1+r+1+(c-3)q'=1+k+cq'+r'=n$. In the expression $\grundy{h_0}\oplus\dots\oplus\grundy{h_{c}}$ the terms $\grundy{h_1}$ and $\grundy{h_4}$ appear an even number of times and $\grundy{h_{3}}=0$. Hence $\grundy{O_n}=g$.
      \end{itemize}
  In each case, there is an option $O_n$ of $H_n$ obtained by a $c$-cut, $c\geq 2$, such that $\grundy{O_n}=g$, \emph{i.e.} \preG{\C} $(p,t)$-verifies AP3, which means that  $\AP{\C}=(p,t)$.
  \end{proof}

If $|\C|=2$ and one possibility is to split a heap into exactly two heaps, and the other is to split into a a given odd number of  heaps, then sometimes AP1 and AP2 imply AP3. 

  \begin{prop}\label{prop:AP3pairs}
  Let $\C = \{1, c\}$ with $ c\ge 4$ even, and suppose that \preG{\C} $(p,t)$-verifies $AP1$ and $AP2$, with $ c \leq p$. If there exist heap sizes $1\le x_1,x_2 \leq p/2$ of different parity, such that $\grundy{x_1} = \grundy{x_2}$, then $\AP{\C}=(p,t)$.
  \end{prop}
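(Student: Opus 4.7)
The plan is to verify condition AP3 only, since AP1 and AP2 are already assumed. Given $n\in[3p+1,4p]$ and $g\in[0,s-1]$ with $s=2^t$, I need to build an option $O_n$ obtained by a $c'$-cut with $c'\in\C$, $c'\ge 2$ and $\grundy{O_n}=g$. Because the only cut-number in $\C$ that is at least $2$ is the even number $c$ itself, this forces a $c$-cut, \emph{i.e.} a partition of $n$ into $c+1$ positive parts. Using AP2, I first pick $k\in[1,p]$ with $\grundy{k}=g$ and designate one heap $h_0=k$ as the ``payload''. The remaining $c$ heaps must then sum to $n-k$ and have Grundies XOR-ing to zero; since $c$ is even, pairing equal heaps is the natural way to do this, and Lemma~\ref{lem:h0} will then yield $\grundy{O_n}=\grundy{k}=g$.

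I would then split into two cases according to the parity of $n-k$. If $n-k$ is even, I take $h_1=h_2=\tfrac{1}{2}(n-k-c+2)$ and $h_3=\cdots=h_c=1$, producing $c/2$ pairs of equal heaps (the pair $(h_1,h_2)$ together with $(c-2)/2$ pairs of ones), so that all non-zero Grundies are paired. Positivity reduces to $n-k\ge c$, which follows from $n\ge 3p+1$, $k\le p$ and $c\le p$, giving $n-k\ge 2p+1>c$.

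If $n-k$ is odd, plain pairing cannot reach an odd sum, and this is exactly where the hypothesis on $x_1,x_2$ comes in: I set $h_1=x_1$, $h_2=x_2$, $h_{c-1}=h_c=\tfrac{1}{2}(n-k-x_1-x_2-c+4)$ and $h_3=\cdots=h_{c-2}=1$ (omitting the block of ones when $c=4$). The pair $(x_1,x_2)$ contributes $0$ to the XOR by $\grundy{x_1}=\grundy{x_2}$, while the fact that $x_1+x_2$ is odd corrects the parity of $n-k-x_1-x_2$, making it even so that the final pair is integral. Lemma~\ref{lem:h0} then again delivers $\grundy{O_n}=g$.

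The main delicate point is not the XOR bookkeeping but ensuring positivity of every heap in the odd case, which is where the size bound $x_1,x_2\le p/2$ is crucial: together with $n\ge 3p+1$ and $k\le p$, it yields $n-k-x_1-x_2\ge p+1\ge c+1$, comfortably larger than $c-2$, so that the final pair is at least $1$. Once both parity cases produce a valid $c$-cut realising every target $g\in[0,s-1]$, AP3 holds, and combined with the assumed AP1 and AP2 we conclude $\AP{\C}=(p,t)$.
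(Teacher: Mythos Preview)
Your proof is correct and follows essentially the same approach as the paper: fix $k\in[1,p]$ with $\grundy{k}=g$ via AP2, then build a $c$-cut whose remaining $c$ heaps have Grundy values cancelling in pairs, splitting into cases by the parity of $n-k$ and using the pair $(x_1,x_2)$ to fix the odd-sum case. Your handling of the case $c=4$ is in fact cleaner than the paper's: you simply drop the block of ones and keep the construction $(k,x_1,x_2,f,f)$, whereas the paper introduces a separate ad~hoc option for $c=4$; your uniform treatment works because the positivity bound $n-k-x_1-x_2\ge p+1$ already suffices.
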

  \begin{proof}
We prove that the game \preG{\C} $(p,t)$-verifies AP3, \emph{i.e.}, for any heap $H_n$ with $n\in [3p+1,4p]$ and for any \G-value $g\in [0,2^t-1]$, there exists an option $O_n$, such that $\grundy{O_n} = g$. Since AP2 is $(p,t)$-verified, this can be done by proving that, for all heap sizes $k \in [1, p]$, there exists an option $O_n$ such that $\grundy{O_n} = \grundy{k}$.

  The proof is divided into four cases depending on the parities of $k$ and $n$, and we summarize the relevant $c$-cuts in a table:

   \begin{center}
   {\renewcommand{\arraystretch}{2}%
         \begin{tabular}{|c|c|c|}
          \hline
          $O_n$ & $n$ odd & $n$ even \\
          \hline
          {$k$ odd} &$\left(k, f(n,k),f(n,k),1,\ldots ,1\right)$ & $\left(k, x_1,x_2,f(n,k),f(n,k),1,\ldots ,1\right) $\\
         {$k$ even} &$\left(k,  x_1,x_2, f(n,k),f(n,k),1,\ldots ,1\right)$ & $ \left(k,f(n,k),f(n,k),1,\ldots ,1\right) $\\
         \hline
         \end{tabular}}
         \end{center}
Here $$f(n,k)=\frac{n-k-c}{2}+1$$ if $n$ and $k$ have the same parity, and $$f(n,k)=\frac{n-k- c-x_1-x_2+5}{2}$$ if $n$ and $k$ have different parities and $c\ge 6$. If $c=4$, then by say $0<x_1<x_2$ different parities, we consider instead the option $O_n=\left(k, x_1,x_2-1,f(n,k),f(n,k)\right)$. It is straightforward to justify in each case that $O_n$ is an option, and then apply Lemma~\ref{lem:h0}.

  Hence, for all heap sizes $k \in [1,p]$, there exists an option of $n$ with the same \G-value. This implies that the condition AP3 is $(p,t)$-verified, and thus that the AP-test is $(p,t)$-verified for \preG{\C}.
  \end{proof}

We now prove that the conditions of Proposition~\ref{prop:AP3pairs} are verified for all those games as long as the period is lower bounded by $4 c+3\leq p$. 

\begin{corollary}\label{cor:AP3pairs}
Let $\C=\{1, c\}$ with $ c\ge 4$ even. If \preG{\C} $(p,t)$-verifies AP1 and AP2 for some $p\geq 4 c+3$, then $\AP{\C}=(p,t)$.
\end{corollary}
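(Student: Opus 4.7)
The plan is to apply Proposition~\ref{prop:AP3pairs}, which reduces the corollary to exhibiting heap sizes $1 \le x_1, x_2 \le p/2$ of different parity with $\grundy{x_1} = \grundy{x_2}$. Since $p \ge 4c+3$ gives $\lfloor p/2 \rfloor \ge 2c+1$, the candidates $x_1 = 2$ and $x_2 = 2c+1$ both lie in the required range and have different parities, so it suffices to establish $\grundy{2} = \grundy{2c+1} = 1$. The value $\grundy{2}=1$ is immediate, so only the second equality requires work, and I will compute it by determining all $\G$-values on $[1, 2c+1]$ directly, without appealing to AP1 or AP2.

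First, for $k \in [1, c]$ no $c$-cut is feasible (it would require at least $c+1$ beans), so the game coincides with $\preG{\{1\}}$ on this range, and Proposition~\ref{prop:1odd} yields $\grundy{k} = (k+1) \bmod 2$. For $k = c+j$ with $j \in [1, c]$, I claim, and prove by induction on $j$, that $\grundy{k} = 2$ when $j$ is odd and $\grundy{k} = 3$ when $j$ is even. In a 1-cut $(a, k-a)$, either both heaps lie in $[1, c]$ (so that the XOR, being a XOR of values in $\{0,1\}$, depends only on the parity of $k = c+j$, i.e.\ on $j \bmod 2$), or exactly one heap exceeds $c$ (so the inductive hypothesis applies to that heap); a routine parity case analysis shows these options produce exactly $\{1,3\}$ when $j$ is odd and $\{0,2\}$ when $j$ is even. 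Every $c$-cut of $c+j$ has parts of size at most $j \le c$ (since the remaining $c$ parts contribute at least $c$), so each part contributes a $\G$-value in $\{0,1\}$; because $c+1$ is odd and the sum of the $c+1$ parts equals $c+j$, the number of \emph{even} parts has forced parity $1 - (j \bmod 2)$, giving a constant $c$-cut XOR equal to $1 - (j \bmod 2)$. Taking the mex of these values yields the claim.

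Finally, for $k = 2c+1$: every 1-cut $(a, 2c+1-a)$ consists of one heap in $[1, c]$ (with $\G$-value in $\{0,1\}$) and one in $[c+1, 2c]$ (with $\G$-value in $\{2,3\}$), and a short parity case analysis using the values from the previous step gives XOR equal to $3$ in every case. Every $c$-cut of $2c+1$ has maximum part at most $c+1$: either some part equals $c+1$, forcing the partition $(c+1, 1^c)$ with XOR $\grundy{c+1} = 2$, or every part lies in $[1, c]$, in which case the parity count (using that $c+1$ is odd and the sum is $2c+1$) yields XOR $0$. Hence $\grundy{2c+1} = \mex\{0, 2, 3\} = 1 = \grundy{2}$, so the hypothesis of Proposition~\ref{prop:AP3pairs} is satisfied and $\AP{\C} = (p, t)$. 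The main delicate step is the parity bookkeeping across the cases of the inductive $\G$-value computation, but each case is a short calculation made clean by the fact that $c$ is even and $c+1$ is odd.
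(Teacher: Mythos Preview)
Your proof is correct and follows essentially the same route as the paper: both invoke Proposition~\ref{prop:AP3pairs} with the pair $(2,\,2c+1)$, verify $\grundy{2}=1$, determine the $\G$-values on $[1,c]$ via Proposition~\ref{prop:1odd}, and then analyse $[c+1,2c]$ and finally $2c+1$. The only difference is one of precision: the paper merely shows $\grundy{k}>1$ for $k\in[c+1,2c]$ (by exhibiting options of $\G$-values $0$ and $1$) and then argues that no option of $2c+1$ has $\G$-value $1$, whereas you compute the exact values $\grundy{c+j}\in\{2,3\}$ and use them to pin down every option of $2c+1$. (One tiny imprecision: for $j=1$ the $1$-cut options yield only $\{1\}$, not $\{1,3\}$, since no part can exceed $c$; this does not affect the mex once the $c$-cut value $0$ is included.)
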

\begin{proof}
By Proposition~\ref{prop:AP3pairs}, it suffices to prove that there exists $x_1,x_2<p/2$ of different parities, such that $\grundy{x_1}=\grundy{x_2}$.

Note that $\grundy{2}=1$, since the only option is $(1,1)$ which has $\G$-value $0$. Hence we can assume $x_2=2$.

We claim that we can choose $x_1=2 c+1$ to get $\grundy{x_1}=\grundy{2}=1$. In order to do that, we prove that the beginning of the \G-sequence of the game \preG{\C} is $(0,1)^{ c/2}$ and the following $ c$ values are different from $1$ and $0$, and the $2 c+1$-th value is $1$. 
If $k\leq  c$, then $\grundy{k} =(k-1)\pmod 2$, by Proposition~\ref{prop:1odd}.

Now, let $k\in [c+1,2 c]$. If $k$ is odd, then $k$ admits the $1$-cut option $(k- c, c)$ of Grundy value $1$ since $ c$ is even, and the $ c$-cut option $(k- c,1,\dots,1)$ of Grundy value $0$. If $k$ is even, it admits the $ c$-cut option $(k- c,1,\ldots,1)$ of Grundy value $1$, and the $1$-cut option $(k/2,k/2)$ of Grundy value $0$. It thus implies that $\grundy{k}>1$.

Finally, we prove $\grundy{2 c+1}=1$. We now set $k=2 c +1$.

From $k$, one can reach the value $0$ by the option $(1,2,\ldots,2)$ obtained by a $ c$-cut. All the $1$-cuts $(i_0,i_1)$ are such that without loss of generality $i_0> c$ and $i_1\leq c$, so $\grundy{i_0,i_1}\neq 1$ since $\grundy{i_1}< 2$ and $\grundy{i_2}\geq 2$.

It suffices to prove that there is no $ c$-cut $O_k=(i_0,\ldots,i_ c)$ with $\grundy{O_k}=1$. If there is some $j$ such that $i_j> c$, then it is unique, and as before $\grundy{i_j}\ge 2$  implies $\grundy{O_k}\geq 2$. Hence, assume, for all $j$, $i_j\leq  c$. If the \G-value of the $c$-cut were 1, then we must have an odd number of even heap sizes, and hence an even number of odd heap sizes, since $c$ is even. But $2c+1$ is odd, so the $c$-cut cannot sum up correctly. 
Therefore, $\grundy{2 c+1}=1$. Moreover, $2 c+1<p/2$ since $4 c+3\leq p$. 
\end{proof}

\subsection{Applications of the AP-test}

Table~\ref{tab:someGames} summarizes applications of the AP-test for some partitioning games (the games already solved in Section~\ref{sec:psg} are not in the table). All the games in this list satisfy the test and hence are proved to be purely arithmetic-periodic. More specifically, Corollary~\ref{cor:AP3pairs} has been applied to the games $\{1,4\}$, $\{1,6\}$,  $\{1,8\}$, and $\{1,10\}$. We note that for games of the form $\{1, c\}$, there seem to be rather long periods depending on $ c$, with always the same saltus. We wonder whether this regularity holds for higher values of $ c$. 

\begin{problem}
Given $ c\geq 2$, the game \preG{\C} with $\C=\{1,2 c\}$ is arithmetic-periodic of length $12 c$ and saltus $8$.
\end{problem}

Surprisingly, when one adjoins new values to the games $\{1,2 c\}$ (with $ c\geq 2$),  the period and saltus may change significantly. This is for example the case for the game $\{1,4\}$, which has period 24 and saltus~8, but $\{1,3,4\}$ has period 4 and saltus 2, while $\{1,4,9\}$ has period 40 and saltus 16.

The cases where $1,2 \in \C$ but $3 \notin \C$ remain the hardest to understand. If Table~\ref{tab:someGames} suggests a purely arithmetic-periodic behavior when $|\C|\geq 3$, we did not detect any general pattern, depending on $\C$. For example, when $|\C|=3$, the games $\{1,2,4\}$ and $\{1,2,6\}$ have identical Sprague-Grundy sequences, whereas $\{1,2,5\}$ and $\{1,2,7\}$ are more singular. Even worse, the games $\{1,2,8\}$ and $\{1,2,7,8\}$ seem to be ultimately arithmetic-periodic with saltus $8$ and a preperiod of positive length equal to $6$ (which is not the case of the other sequences we computed).\footnote{These are the only examples we found with a non-0 preperiod.} A proof of this result remains to be done, as the AP-test does not cover ultimate behavior (and does not seem to be easily adapted for them).

\begin{table}[!h]
	\begin{bigcenter}
\begin{tabular}{|c|c|} \hline
{\sc cut}-set & Sprague-Grundy sequence \\ \hline
$\{1,4\} \cup K$ & \multirow{2}{*}{$((0,1)^2(2,3)^2,1,4,5,4,(3,2)^2(4,5)^2(6,7)^2)~(+8)$} \\
with $K \subseteq \{6,8,10\}$ & \\ \hline
$\{1,6\} \cup K$ & \multirow{2}{*}{$((0,1)^3(2,3)^3,1,4,(5,4)^2(3,2)^3(4,5)^3(6,7)^3)~(+8)$} \\
with $K \subseteq \{8,10\}$& \\ \hline
$\{1,8\}$ & $((0,1)^4(2,3)^4,1,4,(5,4)^3(3,2)^4(4,5)^4(6,7)^4)~(+8)$ \\ \hline
$\{1,10\}$ & $((0,1)^5(2,3)^5,1,4,(5,4)^4(3,2)^5(4,5)^5(6,7)^5)~(+8)$ \\ \hline
$\{1,4\} \cup K$ & \multirow{2}{*}{$(0,1)^2~(+2)$} \\
with $K \subseteq \{3,5,6,7,8\}$ and 3, 5 or 7 $\in K$ & \\ \hline
$\{1,6\} \cup K$ & \multirow{2}{*}{$(0,1)^3~(+2)$} \\
with $K \subseteq \{3,5,7,8\}$ and 3, 5 or 7 $\in K$ & \\ \hline
$\{1,8\} \cup K$ & \multirow{2}{*}{$(0,1)^4~(+2)$} \\
with $K \subseteq \{3,5,7\}, K \neq \emptyset$ & \\ \hline
$\{1,2,4\} \cup K, \{1,2,6\} \cup K'$ & \multirow{2}{*}{$(0,1,2,3,1,4,3,2,4,5,6,7)~(+8)$} \\
with $K \subseteq \{6,7,8\}, K' \subseteq \{7,8\}$ & \\ \hline
$\{1,2,5\} \cup K$ & \multirow{2}{*}{$(0,1,2,3,1,4,3,6,4,5,6,7)~(+8)$} \\
with $K \subseteq \{4,6,7,8\}$ & \\ \hline
$\{1,2,7\}$ & $(0,1,2,3,1,4,3,2,4,5,6,7,8,9,7,6,9,8,11,10,12,13,10,11,13,12,15,14)~(+16)$ \\ \hline
\multirow{2}{*}{$\{1,4,9\}$} & $(0,1,0,1,2,3,2,3,1,4,5,4,3,6,7,6,4,5,8,9,6,7,10,11,9,$ \\
& $8,9,12,11,10,11,14,12,13,12,13,14,15,14,15)~(+16)$ \\ \hline
\end{tabular}
\end{bigcenter}
\caption{Some partition games for which the purely arithmetic-periodicity is proved with the AP-test.}
\label{tab:someGames}
\end{table}

\section{A review of {\sc take-and-break} in relation with {\sc cut}}\label{sec:background}
We review some history of theory intertwining {\sc subtraction} with {\sc cut}, and such games are gathered under the umbrella {\sc take-and-break}. (Then in Section~\ref{sec:conper}, we return to conclude our findings in this broader perspective.)

\subsection{{\sc take-and-break}}
{\sc take-and-break} \cite{WW} is a large class of 2-player impartial combinatorial games with alternating play. A game position is represented by a finite multiset of heaps. A move consists in choosing a single heap, removing some beans, and possibly splitting the remaining heap into several heaps. If splitting is not allowed, we have (pure) {\sc subtraction}. In this case, the rules are given by a set $\S$ of positive integers which specifies the number of tokens that can be removed from the heap. When the heap may be split, the rulesets are often given by a code that specifies how many tokens can be removed and the number of heaps that the remainder of the heap can be split into.

Let us review some of the history of {\sc take-and-break}. The origin to the line of research was the invention of a fairy game of {\sc chess} in the 1930s, called {\sc dawson's chess} \cite{D}, named by its inventor, played by using only the pawns of {\sc chess} and with mandatory captures. It was soon remarked that the game is equivalent to a game on  heaps of counters, where a player may remove (i) a single counter if by removing it no heap remains, (ii) two counters if no heap or a single heap remain (iii) three counters if no heap, one heap or two heaps remain. In the normal play convention it was discovered that the solution has a regular behavior (ultimately periodic) \cite{GS}. The elegance of the solution of {\sc dawson's chess}, together with the similar game of {\sc kayles} \cite{GS} (remove one or two counters anywhere), gave rise to classifications of games on heaps of counters \cite{GS}.

For example, the family of games for which a heap can be split into at most two heaps is  nowadays called {\sc octal} games. This name is due to an explicit way to express any ruleset with an octal code $\mathbf{d_0.d_1\cdots d_k}$ with $d_i$ an integer, $0 \leq d_i \leq 7$ for $1 \leq i \leq k$. More precisely, each value $d_i$ with $i>0$ is encoded in binary as $d_i=2^0e_0+2^1e_1+2^2e_2$, where each $e_j\in\{0,1\}$. The ruleset allows to remove $i$ tokens from a heap and split the remainder into $j$ non-empty heaps if and only if $e_j=1$. So, for example {\sc dawson's chess} is the {\sc octal} game $\mathbf {0.137}$, and {\sc kayles} is the game $\mathbf{0.77}$. 
Observe that, for {\sc octal}, the value $d_0$ equals $0$ or $4$ according to whether it is allowed (value $4$) or not (value $0$) to split a heap without removing any token. Variants of {\sc octal}, where the ruleset allows to split a heap without removing any token, have been considered in the literature, starting from {\sc grundy's game} in 1939~\cite{G}. An extra condition has been imposed to {\sc grundy's game}, to avoid trivialities, namely the outcome of a split must result in heaps of non-equal sizes. Hence, {\sc grundy's game} is not in {\sc octal}.

The notation for {\sc octal} has been extended to define an arbitrary instance of (pure) {\sc take-and-break}, by giving any integer value to each $d_i$. According to the binary decomposition of $d_i=2^0e_0+2^1e_1+2^2e_2+2^3e_3\ldots$, each value $e_j\in\{0,1\}$ indicates whether it is allowed to remove $i$ tokens and split the remainder into $j$ non-empty heaps. As an example, in the family of {\sc hexadecimal} games, each $d_i$ is in $\{0,\ldots,2^4-1\}$ (generally encoded by an hexadecimal notation), indicating that a heap can be split into at most three heaps.

We relate the current work to {\sc take-and-break} by considering particular rulesets, without `take'/removal.
To our knowledge, the full class of `break-without-take' has not yet been studied, at least not anywhere near the extent of the folklore results and conjectures on pure subtraction games, and other {\sc octal} games, which were researched extensively through the last couple of decades, e.g. \cite{F, WW}.

\subsection{Regularities in {\sc take-and-break} games}\label{sec:regtbg}

Given a {\sc take-and-break} game, its $\mathcal{G}$-sequence is the sequence $\grundy{1},\grundy{2},\grundy{3}, \ldots$. Finding regularities in $\mathcal{G}$-sequences is a natural objective as it may lead to polynomial-time algorithms that compute the $\mathcal{G}$-values of the game. In particular, periodic behaviors are often observed. A game is said to be {\em ultimately periodic} with period $p$ and preperiod $n_0$ if there exist $n_0$ and $p$ such that $\grundy{n+p}=\grundy{n}$ for all $n \geq n_0$. {\em Pure periodic} games are those for which there is no preperiod; \emph{i.e.} $n_0=0$.

For example, it is well known (see \cite{Siegel}, Chapter 4, Theorem 2.4 and 2.5) that all finite {\sc subtraction} games are ultimately periodic, and a periodicity test has been shown in~\cite{LIP} (see Chapter 7, Corollary 7.34). For {\sc octal}, the behavior of the $\mathcal{G}$-sequences is far from fully understood, even for very short (finite) rule descriptions. Guy famously asked whether every finite {\sc octal} game is ultimately periodic. Nowadays, one might lean towards that this is not the case, but it seems to be a very hard problem to understand fully. For example, there are 79 non-trivial {\sc octal} games with at most 3 octal-digits among which only 14 have been solved \cite{F}. So, in this sense, {\sc octal} seems perhaps more complicated than {\sc cut}\footnote{Recall that {\sc cut} is necessarily invariant, but octal games are typically non-invariant, by setting for example the $2^0$ component to 0 for some digit.}.

Out of the games that were proved to satisfy this conjecture, one can mention $\mathbf{0.106}$, $\mathbf{0.165}$ and $\mathbf{0.454}$. In some cases, the values of the period and the preperiod are huge (e.g. $\mathbf{0.454}$ has a period of $60620715$ and a preperiod of $160949019$). On his webpage~\cite{F}, Flammenkamp maintains a list of octal games with known and unknown periodicities;  the game 0.106 has period length  328226140474 and a preperiod of length 465384263797.

J. P. Grossman \cite{officer} recently contributed to some ``parallelization strategies'' and other optimizations that accelerate the computation of Sprague-Grundy values of the game $\mathbf{0.6}$, also called {\sc officers}, by nearly 60 times of the previous records. His  implementation computed over 5 million values per second, a total of more than 140 trillion values over a course of 18 months, without finding any period. And note that {\sc officers} is a single digit octal game (!). As was shown in Winning Ways~\cite{WW}, to prove periodicity it suffices to establish that an octal game contains only finitely many \G-values. But the problem is that extensive computations reveal the ``sparse value phenomenon'', and to our knowledge there is no theory yet to handle them. Grossman's approach exploits the fact that sparse values are extremely non-dense, and correctly used, this can dramatically speed up computation, searching for possible regularity (and his method can be used for other games as well).
{\sc officers} is the only single digit octal game for which Richard K. Guy's original question/conjecture remains unanswered. Somewhat surprisingly, the invariant ditto, the game $\mathbf{0.7}$ where, in addition, a heap with a single bean may be removed, reverts to ``she-loves-me-she-loves-me-not''.

As explained in \cite{Hexa}, some hexadecimal games also satisfy these properties of normal periodicity (e.g. $\mathbf{0.B3}$, $\mathbf{0.33F}$). In addition, other types of behavior have been exhibited for hexadecimal games, namely {\em arithmetic-periodicity}. A take-and-break game is {\em ultimately arithmetic-periodic} with period $p$, saltus $s$, and preperiod $n_0$ if there exist three integers $n_0$, $p$ and $s$ such that its $\mathcal{G}$-sequence satisfies $\grundy{n+p}=\grundy{n}+s$ for all $n \geq n_0$; if $n_0=0$, the sequence is \emph{purely arithmetic-periodic}.

If (strict) arithmetic-periodicity never occurs in {\sc octal} \cite{austin}, it makes sense in the context of {\sc hexadecimal}, where the Sprague-Grundy values may not be bounded. For example, the games $\mathbf{0.13FF}$ or $\mathbf{0.9B}$ are proved to be ultimately arithmetic-periodic with period $7$ and saltus $4$ in \cite{Hexa}. Note that normal and arithmetic-periodicities are not the only kinds of regularities that have been detected in hexadecimal games. In~\cite{LIP}, the game $\mathbf{0.205200C}$ is said to be {\em sapp-regular}, which means that the $\mathcal{G}$-sequence is an interlacing of two periodic subsequences with an arithmetic-periodic one; this behavior occurs also in variants of octal games with pass moves \cite{pass}. In \cite{Hexa}, the game $\mathbf{0.123456789}$ satisfies $\grundy{2m-1}=\grundy{2m}=m-1$, except $\grundy{2^k+6}=2^k-1$. In \cite{ruler}, Grossman and Nowakowski use the notion of {\em ruler regularity} that arises in the hexadecimal games $\mathbf{0.20\cdots 48}$ with an odd number of intermediate 0s; roughly speaking, it corresponds to a kind of arithmetic-periodic sequence where new terms are regularly introduced that double the length of the apparent period.

For a better understanding of {\sc take-and-break}, the question of how to detect a possible regularity using just a small number of computations is paramount. For example, the folklore {\sc subtraction} periodicity theorem (see for example Chapter 4 of~\cite{Siegel}) relies on that, for a given finite {\sc subtraction} game $S$, it suffices to find a repetition of $\max(S)$ consecutive Sprague-Grundy values, to establish ultimate periodicity. Concerning {\sc octal}, there is a similar result that has been extensively used to prove the ultimate periodicity of some $\mathcal{G}$-sequences~\cite{Siegel}:

\begin{theorem}[{\sc octal} Periodicity Test]\label{thm:octal}
Let $G$ be an octal game $\mathbf{d_0.d_1\cdots d_k}$ of finite length $k$. If there exist $n_0\geq 1$ and $p\geq 1$ such that
$$
\grundy{n+p}=\grundy{n} ~\forall n\in\{n_0, \ldots , 2n_0+p+k-1\},
$$
then $G$ is ultimately periodic with period $p$ and preperiod $n_0$.
\end{theorem}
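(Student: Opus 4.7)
The plan is to prove, by strong induction on $n \geq n_0$, that $\grundy{n+p} = \grundy{n}$; the base case $n_0 \leq n \leq 2n_0 + p + k - 1$ is exactly the hypothesis. For the inductive step, fix $n \geq 2n_0 + p + k$ and assume $\grundy{m+p} = \grundy{m}$ for every $n_0 \leq m < n$. Since $\grundy{n} = \mex\{\grundy{O} \mid O \text{ an option of } H_n\}$, it suffices to show that the set of option \G-values from $H_n$ equals that from $H_{n+p}$, and then conclude by the mex rule.

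Expand each octal digit as $d_i = e_{i,0} + 2 e_{i,1} + 4 e_{i,2}$, for $1 \leq i \leq k$. Since $n > k \geq i$, the take-all moves ($j=0$) are unavailable from both $H_n$ and $H_{n+p}$. The take-$i$-and-leave-one-heap moves (those with $e_{i,1} = 1$) give options $H_{n-i}$ and $H_{n+p-i}$; since $n - i \geq n_0$, the induction yields $\grundy{n-i} = \grundy{n+p-i}$, so these options contribute the same values to the two \G-sets. The take-$i$-and-split-in-two moves ($e_{i,2}=1$) require the main correspondence argument.

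Given a 2-split option $(a, b)$ of $H_n$ with $a + b = n - i$, assume without loss of generality $a \geq b$, so $a \geq (n-i)/2 \geq (n-k)/2 \geq n_0$ by $n \geq 2n_0 + k$. Then $(a+p, b)$ is a valid 2-split option of $H_{n+p}$, and by the inductive hypothesis its \G-value is $\grundy{a+p} \oplus \grundy{b} = \grundy{a} \oplus \grundy{b}$. Conversely, given a 2-split $(a', b')$ of $H_{n+p}$ with $a' \geq b'$, we have $a' \geq (n+p-k)/2 \geq n_0 + p$, so $a' - p \geq n_0 \geq 1$; hence $(a'-p, b')$ is a valid 2-split option of $H_n$, and induction again matches the \G-values. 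The two option-\G-sets thus coincide, completing the induction.

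The main technical obstacle is establishing the pullback direction: from an arbitrary split of $H_{n+p}$, one must find a corresponding split of $H_n$ whose heaps still lie in the induction range. This is precisely what forces the length $2n_0 + p + k - 1$ of the base-case window, since the inequality $(n+p-k)/2 \geq n_0 + p$ needed to guarantee $a' - p \geq n_0$ rearranges to $n \geq 2n_0 + p + k$. Everything else (take-only moves and the forward correspondence) fits in the milder range $n \geq 2n_0 + k$, so no tighter bound is possible with this argument.
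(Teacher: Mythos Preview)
The paper does not prove this theorem; it is quoted from the literature (with a reference to Siegel's textbook) as background for the discussion of take-and-break regularity. Your argument is the standard one and is correct: match options of $H_n$ and $H_{n+p}$ via the ``shift the larger part by $p$'' correspondence, using the inductive hypothesis on the larger part, and verify that the threshold $n \ge 2n_0 + p + k$ is exactly what is needed to ensure the pulled-back part $a'-p$ still lies in the induction range $[n_0, n)$.

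One small omission: you expand $d_i$ only for $1 \le i \le k$, but an octal code may have $d_0 = 4$, allowing a pure split with no removal. This case is covered by your same 2-split correspondence with $i = 0$; the inequalities go through unchanged since $n \ge 2n_0 + p + k \ge 2n_0 + p$. It is worth one sentence to say so explicitly.
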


Such kind of testing properties have also been considered for {\sc hexadecimal}. In \cite{austin}, Austin yields a first set of conditions to guarantee the ultimate arithmetic-periodicity of {\sc hexadecimal} with a saltus equal to a power of $2$. An extension was later given by Howse and Nowakowski \cite{Hexa} for {\sc hexadecimal} having an arbitrary saltus. In both cases, several types of computations must be done. In particular, the arithmetic-periodicity must be checked on a range of values much larger than in Theorem~\ref{thm:octal} (at least seven times the expected period).

\subsection{The partition game {\sc cut} as a pure-break game}

The partition game {\sc cut} is a pure-break game in two senses. The first meaning is that they have no special rule which restricts the type of breaking possible, in contrast with for example {\sc grundy's game} (a split must result in unequal heap sizes), {\sc couples are forever} (any split into two parts is possible, except if the heap is $H_2$). Secondly there is no ``taking'', \emph{i.e.} no subtraction of tokens is possible at any stage of play.

As noted already in going from {\sc octal} to {\sc hexadecimal}, allowing a heap to be split into three parts may significantly change the behavior of the $\mathcal{G}$-sequence. In the current paper, we have explored how the $\mathcal{G}$-sequences behave when increasing the number of possible splits of a heap. As one might expect that the complexity of this generalization also increases accordingly, we have chosen to focus on the arguably most natural class of break games, namely \emph{break-without-take}, \emph{i.e.} games where it is never allowed to remove tokens from the disjunctive sum of heaps. The total number of beans in the heaps remain the same until the end of play. If the game starts with a large number $n$, then the game proceeds by finding finer and finer partitions of this number, until no more refinement is possible. 

Theorem~\ref{lem:ericslemma} means that partition games are somehow closer to {\sc hexadecimal} than {\sc octal}. This is not so surprising, by the possibility of an extra cut in {\sc hexadecimal}. Moreover, we note that partition games form particular instances of {\sc take-and-break}.

\begin{prop}\label{prop:equivalence}
The partition game \preG{\C} played on a heap of size $n$ is equivalent to {\sc take-and-break} $\mathbf{0.d_1d_2\cdots }$ played on a heap of size $n-1$, where $d_c=2^{c+2}-1$ if $c\in\C$, and $d_c=0$ otherwise. That is, decompose $d_0$ in binary, so that $d_0=\sum e_i2^{i+1}$.  Then, for all $n>0$, $\mathbf{d_0.00}\cdots(H_n) = \mathbf{0.d_1d_2\cdots}(H_{n-1})$, where $d_i=2^{i+2}-1$ if $e_i=1$ and where $d_i=0$ if $e_i=0$.
\end{prop}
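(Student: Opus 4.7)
The plan is to prove the equivalence by induction on $n$ via a natural bijection between the move-sets of the two games, in which matched pairs of options have equal Grundy value. For the base case $n=1$, a $c$-cut requires $c+1\ge 2$ positive parts summing to $1$, which is impossible, so $H_1$ has no option in $\preG{\C}$; likewise $H_0$ is terminal in the take-and-break game $\mathbf{0.d_1d_2\cdots}$, and both have Grundy value $0$.

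For the inductive step, fix $n\ge 2$ and assume that for each $m<n$ the Grundy value of $H_m$ in $\preG{\C}$ equals the Grundy value of $H_{m-1}$ in $\mathbf{0.d_1d_2\cdots}$. Given a $c$-cut $(n_0,\ldots,n_c)$ of $H_n$ in $\preG{\C}$ with $c\in\C$, $n_i\ge 1$, and $\sum n_i=n$, let $k$ denote the number of parts equal to $1$ and set $j=c+1-k\in\{0,1,\ldots,c+1\}$. I match this option with the take-and-break move that removes $c$ tokens from $H_{n-1}$ and splits the remaining $n-1-c$ tokens into the $j$ non-empty heaps of sizes $n_i-1$ for the indices with $n_i\ge 2$. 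This is legal because $d_c=2^{c+2}-1$ has bits $0,1,\ldots,c+1$ all set, which is precisely the set of allowed split-counts after removing $c$ tokens. Conversely, any take-and-break move that removes $c$ tokens and splits the remainder into $j\le c+1$ non-empty heaps $(m_1,\ldots,m_j)$ is the image of the partition cut $(m_1+1,\ldots,m_j+1,\underbrace{1,\ldots,1}_{c+1-j})$, which has $c+1$ positive parts summing to $n$.

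Under this bijection the Grundy values of matched options agree: by Sprague-Grundy theory the partition option has value $\bigoplus_{i=0}^c \grundy{n_i}$, while its image has value $\bigoplus_{n_i\ge 2}\grundy{n_i-1}$ (empty heaps contribute nothing to a nim-sum). By the induction hypothesis these two nim-sums coincide term-by-term for $n_i\ge 2$, and the $n_i=1$ terms contribute $\grundy{1}=0$ on the partition side and are harmless. The $\mex$ rule then forces $\grundy{H_n}=\grundy{H_{n-1}}$, completing the induction.

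The main point requiring care—and the reason $d_c$ is set to $2^{c+2}-1$ rather than merely having its bit $c+1$ on—is the asymmetric treatment of size-$1$ heaps: they are terminal nonempty components in $\preG{\C}$ (so every $c$-cut produces exactly $c+1$ parts) but correspond to empty heaps that disappear from the position in the take-and-break formulation. A $c$-cut that happens to produce $k$ ones therefore translates into a take-and-break split into $c+1-k$ non-empty heaps, and for the bijection to close, every $j\in\{0,\ldots,c+1\}$ must be a legal split-count when removing $c$ tokens, which is exactly the condition $d_c=2^{c+2}-1$. Once this bookkeeping is fixed, the remainder is routine.
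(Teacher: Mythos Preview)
Your proof is correct. The paper's argument takes a more conceptual route: it interprets $\preG{\C}$ on $H_n$ as an edge-deletion game on the path $P_n$ (a $c$-cut removes $c$ edges), then passes to the line graph $L(P_n)=P_{n-1}$, where edge-deletion becomes vertex-deletion, which is exactly the take-and-break game on $H_{n-1}$. Your explicit bijection $(n_0,\ldots,n_c)\mapsto\{n_i-1:n_i\ge 2\}$ is precisely this line-graph correspondence unwound combinatorially, and your induction on Grundy values replaces the paper's implicit game-tree isomorphism. The substance is the same; the paper's version gives a one-line structural explanation, while yours is self-contained and makes fully transparent why all bits $0,\ldots,c+1$ of $d_c$ must be set (to accommodate the variable number of size-$1$ parts that vanish under the map).
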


\begin{proof}
Interpret the game \preG{\C} played on a heap of size $n$ as a game on a path $P_n$ of size $n$ as follows. A $k$-cut consists in removing $k$ edges from the path, leaving $k+1$ non-empty smaller paths. Now consider the line graph of $P_n$ (\emph{i.e.} the graph where edges are replaced by vertices, and two vertices are adjacent if the corresponding edges where incident in the initial graph). Clearly, this line graph is a path of size $n-1$. A $k$-cut in $P_n$ thus consists in removing $k$ vertices from the line graph. Hence \preG{C} is equivalent to a take-and-break game $\mathbf{0.d_1d_2\ldots }$ on the path of size $n-1$. As there is no constraint on the edges removed from $P_n$, there is no constraint on the corresponding vertices in the line graph. Hence, if $k\in \C$, the code of each $d_k$ is the maximal possible value, $2^0+2^1+\cdots+2^{k+1}=2^{k+2}-1$, \emph{i.e.} remove $k$ vertices and leave any number of heaps $\le k+1$.
\end{proof}

Proposition~\ref{prop:equivalence} shows that $\preG{\{1,2\}}$ is equivalent to {\sc hexadecimal} $\mathbf{0.7F}$. Applied to the game {\sc couples-are-forever}, the same analysis proves that it is equivalent to {\sc octal} $\mathbf{0.6}$, \emph{i.e.} {\sc officers}; moreover a similar argument shows that {\sc dawson's chess} has two equivalent representations $\mathbf{0.137}$ and $\mathbf{0.4}$ (with two additional tokens), and so on.

This discussion concerned several examples of extremely short {\sc take-and-break} rulesets with great complexity. By relating to the case of \cut\ one might guess that in general, the complexity of the $\mathcal{G}$-sequence does not increase with $\max(\C)$. Indeed in all case of analysis in this paper, except one, namely $\C = \{1,2\}$, the $\mathcal{G}$-sequence is either periodic or arithmetic-periodic. Let us conclude the paper, with related open questions.

\section{Conclusion and perspectives}\label{sec:conper}

The results from Sections~\ref{sec:psg} and \ref{sec:aptsg} are summarized in Table~\ref{tab:recap}. The games are partitioned into three families: those for which periodicity or arithmetic-periodicity is proved, and those for which we used two or three conditions of the AP-test, respectively.

\begin{table}[!h]
\centering
\begin{tabular}{|c|c|c|c|}
\hline
& The ruleset $\C$ & Sequence & Theorem  \\ \hline
\multirow{4}{*}{Solved}
& $\{1, c_2,\ldots \}$ ($ c_i$ odd) & $(0,1)~(+0)$ & Proposition~\ref{prop:1odd} \\
& $\{ c_1,\ldots \}$ ($ c_1 > 1$) & $(0)^{ c_1} ~(+1)$ & Proposition~\ref{lem:k*} \\
& $\{1,2,3, c_4,\ldots \}$ & $(0) ~(+1)$ & Proposition~\ref{lem:123} \\
& $\{1,3,2k\}$ ($k \geq 1$) & $(0,1)^ c ~(+2)$ & Proposition~\ref{lem:132k} \\ \hline
\multirow{2}{*}{AP1 and AP2} & $\{1,2 c,2 c'+1, c_1,\ldots, c_k\}$ & & Proposition~\ref{prop:1,2=>3} \\
& $\{1,2 c\}$ ($ c \geq 2$) & & Corollary~\ref{cor:AP3pairs} \\ \hline
AP1, AP2 and AP3 & $\{1, c_1,\ldots, c_k\}$ ($ c_i$ even, $k \geq 1$) & & Theorem~\ref{lem:BigFatLemma} \\ \hline
\end{tabular}
\caption{Some partition games.}
\label{tab:recap}
\end{table}

Among the families that are not solved, all of our computations on particular examples have shown ultimate arithmetic-periodic behaviors, except one, namely \preG{\{1,2\}}. This game has a Sprague-Grundy sequence with a lot of visible regularity but some surprising `drop-out' values, as shown in Figure~\ref{fig:12EstRelou} (in the second picture the drop-outs have been highlighted).

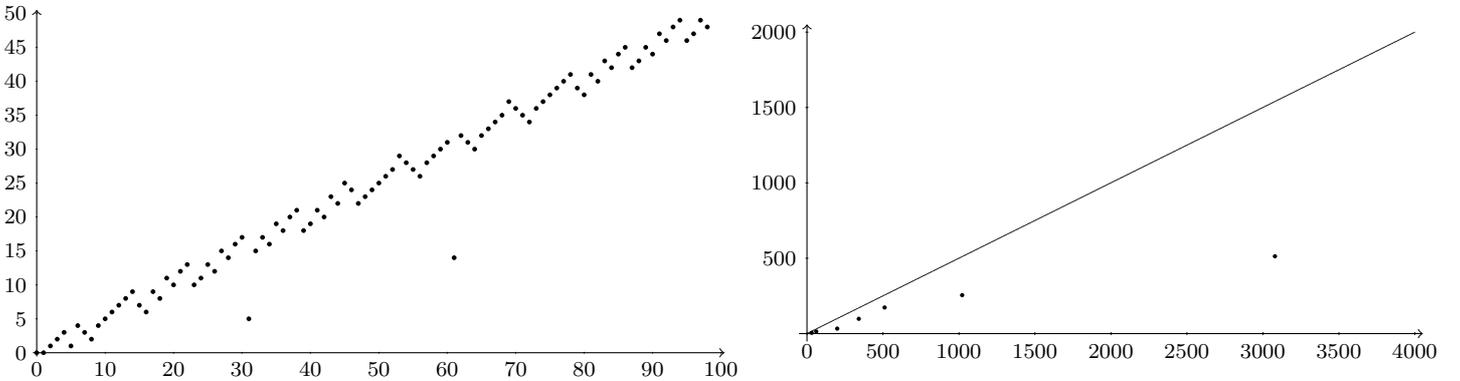
\begin{figure}[!h]
\Wider[3.5cm]{
\begin{minipage}{0.5\linewidth}
\begin{center}
\begin{tikzpicture}[line cap=round,line join=round,scale=0.09]
  \def\limxs{100}
  \def\limxi{0}
  \def\limys{50}
  \def\limyi{0}
  \coordinate (lims) at (\limxs+0.5,\limys+0.5);
  \coordinate (limi) at (\limxi-0.5,\limyi-0.5);
\draw[->,color=black] (\limxi-0.5,0) -- (\limxs+0.5,0);
\foreach \x in {0,10,...,\limxs}
\draw[shift={(\x,0)},color=black] (0pt,2pt) -- (0pt,-2pt) node[below] {\footnotesize $\x$};
\draw[->,color=black] (0,\limyi-0.5)--(0,\limys+0.5);
\foreach \y in {0,5,...,\limys}
\draw[shift={(0,\y)},color=black] (2pt,0pt) -- (-2pt,0pt) node[left] {\footnotesize $\y$};

\clip(limi) rectangle (lims);
\fill (0,0) circle (10pt);
\fill (1,0) circle (10pt);
\fill (2,1) circle (10pt);
\fill (3,2) circle (10pt);
\fill (4,3) circle (10pt);
\fill (5,1) circle (10pt);
\fill (6,4) circle (10pt);
\fill (7,3) circle (10pt);
\fill (8,2) circle (10pt);
\fill (9,4) circle (10pt);
\fill (10,5) circle (10pt);
\fill (11,6) circle (10pt);
\fill (12,7) circle (10pt);
\fill (13,8) circle (10pt);
\fill (14,9) circle (10pt);
\fill (15,7) circle (10pt);
\fill (16,6) circle (10pt);
\fill (17,9) circle (10pt);
\fill (18,8) circle (10pt);
\fill (19,11) circle (10pt);
\fill (20,10) circle (10pt);
\fill (21,12) circle (10pt);
\fill (22,13) circle (10pt);
\fill (23,10) circle (10pt);
\fill (24,11) circle (10pt);
\fill (25,13) circle (10pt);
\fill (26,12) circle (10pt);
\fill (27,15) circle (10pt);
\fill (28,14) circle (10pt);
\fill (29,16) circle (10pt);
\fill (30,17) circle (10pt);
\fill (31,5) circle (10pt);
\fill (32,15) circle (10pt);
\fill (33,17) circle (10pt);
\fill (34,16) circle (10pt);
\fill (35,19) circle (10pt);
\fill (36,18) circle (10pt);
\fill (37,20) circle (10pt);
\fill (38,21) circle (10pt);
\fill (39,18) circle (10pt);
\fill (40,19) circle (10pt);
\fill (41,21) circle (10pt);
\fill (42,20) circle (10pt);
\fill (43,23) circle (10pt);
\fill (44,22) circle (10pt);
\fill (45,25) circle (10pt);
\fill (46,24) circle (10pt);
\fill (47,22) circle (10pt);
\fill (48,23) circle (10pt);
\fill (49,24) circle (10pt);
\fill (50,25) circle (10pt);
\fill (51,26) circle (10pt);
\fill (52,27) circle (10pt);
\fill (53,29) circle (10pt);
\fill (54,28) circle (10pt);
\fill (55,27) circle (10pt);
\fill (56,26) circle (10pt);
\fill (57,28) circle (10pt);
\fill (58,29) circle (10pt);
\fill (59,30) circle (10pt);
\fill (60,31) circle (10pt);
\fill (61,14) circle (10pt);
\fill (62,32) circle (10pt);
\fill (63,31) circle (10pt);
\fill (64,30) circle (10pt);
\fill (65,32) circle (10pt);
\fill (66,33) circle (10pt);
\fill (67,34) circle (10pt);
\fill (68,35) circle (10pt);
\fill (69,37) circle (10pt);
\fill (70,36) circle (10pt);
\fill (71,35) circle (10pt);
\fill (72,34) circle (10pt);
\fill (73,36) circle (10pt);
\fill (74,37) circle (10pt);
\fill (75,38) circle (10pt);
\fill (76,39) circle (10pt);
\fill (77,40) circle (10pt);
\fill (78,41) circle (10pt);
\fill (79,39) circle (10pt);
\fill (80,38) circle (10pt);
\fill (81,41) circle (10pt);
\fill (82,40) circle (10pt);
\fill (83,43) circle (10pt);
\fill (84,42) circle (10pt);
\fill (85,44) circle (10pt);
\fill (86,45) circle (10pt);
\fill (87,42) circle (10pt);
\fill (88,43) circle (10pt);
\fill (89,45) circle (10pt);
\fill (90,44) circle (10pt);
\fill (91,47) circle (10pt);
\fill (92,46) circle (10pt);
\fill (93,48) circle (10pt);
\fill (94,49) circle (10pt);
\fill (95,46) circle (10pt);
\fill (96,47) circle (10pt);
\fill (97,49) circle (10pt);
\fill (98,48) circle (10pt);
\fill (99,51) circle (10pt);
\end{tikzpicture}
\end{center}
\end{minipage}
\begin{minipage}{0.45\linewidth}
\begin{center}
\include{12_long}
\end{center}
\end{minipage}
}
\caption{The Sprague-Grundy sequence of  \preG{\{1,2\}} for $n\leq 100$ and $n\leq 4000$.}\label{fig:12EstRelou}
\end{figure}

In view of our computations, we propose the following problem. 

\begin{problem}
Is it true that every game \preG{\C} with $\C\neq \{1,2\}$ has a Sprague-Grundy sequence that is either ultimately periodic or ultimately arithmetic-periodic?
\end{problem}

A first step to understand this problem would be to justify (or disprove) the following conjecture.

\begin{conjecture}
Every instance \C\ of \cut\ for which $\{1,2\}\not\subset \C$ is (ultimately) arithmetic-periodic.
\end{conjecture}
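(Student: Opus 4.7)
\medskip

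The plan is to reduce the conjecture to a finite number of structural subcases and then attack each one using the AP-test machinery developed in Section~\ref{sec:aptsg}. First I would observe that the hypothesis $\{1,2\}\not\subset \C$ partitions the rulesets into three types: (i) $1\notin\C$, in which case $\min \C\ge 2$ and Proposition~\ref{lem:k*} already gives pure arithmetic-periodicity with saltus~$1$; (ii) $1\in\C$ but $\C$ consists only of odd numbers, handled by Proposition~\ref{prop:1odd}; and (iii) $1\in\C$, $2\notin\C$, and $\C$ contains at least one even element. Only case (iii) is open, so the conjecture reduces to proving arithmetic-periodicity for the family $\C=\{1\}\cup E\cup O$, where $E\subseteq 2\N_{\ge 2}$ is non-empty and $O\subseteq 2\N_0+1$ with $\min O\ge 3$ (possibly $O=\emptyset$).

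Within case (iii) the strategy would be to invoke Theorem~\ref{lem:BigFatLemma} and its relaxed versions. When $O\neq\emptyset$, Proposition~\ref{prop:1,2=>3} collapses the AP-test to verifying only AP1 and AP2, provided some odd and some even cut-number both lie in $[2,2p+1]$. When $O=\emptyset$ and $|\C|=2$, i.e. $\C=\{1,2c\}$, Corollary~\ref{cor:AP3pairs} achieves the same reduction once $p\ge 4c+3$. The remaining gap is the subfamily where $O=\emptyset$ and $|\C|\ge 3$ (so $\C=\{1\}\cup E$ with $|E|\ge 2$), for which I would first prove an analogue of Corollary~\ref{cor:AP3pairs}: using Proposition~\ref{prop:1odd}-type reasoning on initial values one shows that both $\grundy{2}$ and some $\grundy{2k+1}$ with $k=\min E/2$ equal~$1$, producing the pair of different-parity indices required by Proposition~\ref{prop:AP3pairs}. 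So the conjecture ultimately reduces to establishing AP1 and AP2 for every ruleset in case (iii).

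To produce AP1 and AP2 systematically, my plan is the following. Let $p$ and $s=2^t$ be the conjectured period and saltus read off from an initial computation. AP2 is a finite check. For AP1 one would induct on $n$: given the \G-value of each heap of size $\le n+p-1$, decompose any option $O_{n+p}=(h_0,\dots,h_c)$ through the division algorithm, and apply Lemma~\ref{cor:prop14} to write $\grundy{O_{n+p}}=(q_0\oplus\cdots\oplus q_c)s+(\grundy{r_0}\oplus\cdots\oplus\grundy{r_c})$. Since $2\notin\C$, the parity $c+1\in\{2\}\cup\{4,5,\ldots\}$ is never $3$; in particular every $1$-cut pairs heaps with complementary residues, so the low-order part $\grundy{r_0}\oplus\grundy{r_1}$ is forced into a narrow subset of $[0,s-1]$, exactly as in the proof of Case~1 of Lemma~\ref{lem:3p5p}. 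This structural restriction is precisely what rules out new \G-values appearing in $[qs,(q+1)s-1]$ for $q$ large, so the increment $+s$ propagates.

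The main obstacle will be the first step, namely \emph{predicting} the correct $(p,s)$ uniformly in $\C$. Table~\ref{tab:someGames} already shows that $p$ and $s$ depend on $\C$ in a non-monotone way (adjoining $9$ to $\{1,4\}$ changes $(p,s)$ from $(24,8)$ to $(40,16)$). A promising approach is to separate the analysis by $\max E$: for each fixed maximum even cut, the \G-sequence should depend on the admissible parities of intermediate cuts in a controlled manner, and one may hope that $s=2^{\lceil\log_2(\max E)\rceil+O(1)}$ with $p$ a small multiple of $\mathrm{lcm}(\C)$. Even without a clean formula, any proof of AP1/AP2 in the style sketched above would confirm the conjecture via Theorem~\ref{lem:BigFatLemma}. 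The exceptional status of $\C=\{1,2\}$ is consistent with this picture: the presence of the $2$-cut creates the extra parity-$3$ partition class that our argument specifically exploits the \emph{absence} of, which is precisely why its \G-sequence exhibits the drop-out phenomenon of Figure~\ref{fig:12EstRelou}.
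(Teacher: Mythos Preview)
This statement is a \emph{conjecture} in the paper, not a theorem: the authors explicitly leave it open and remark that ``a general proof is probably hard to obtain.'' So there is no proof in the paper to compare against, and your proposal should be read as a research plan rather than a proof.

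Your initial trichotomy is sound, and the reduction to case (iii) is correct. However, the plan does not close the problem. The central gap is that Theorem~\ref{lem:BigFatLemma} and its relaxations are \emph{verification} tools: they require the candidate period $p$ and saltus $s=2^t$ to be known and AP1/AP2 to be checked on the interval $[1,4p]$. You acknowledge this obstacle yourself (``predicting the correct $(p,s)$ uniformly in $\C$''), and the heuristic $s=2^{\lceil\log_2(\max E)\rceil+O(1)}$, $p$ a small multiple of $\mathrm{lcm}(\C)$, is not established anywhere and is not obviously compatible with the data in Table~\ref{tab:someGames} (for instance $\{1,2,7\}$ has $s=16$ and $p=28$, while $\{1,2,4\}$ has $s=8$ and $p=12$). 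Without a uniform formula for $(p,s)$, the AP-test cannot be leveraged into a proof for the whole family.

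Your sketched inductive argument for AP1 also does not go through as written. The observation that $2\notin\C$ forbids $3$-part options is correct, but the inference you draw from it is not: in Lemma~\ref{lem:3p5p}, Case~1, the restriction on $1$-cuts comes from the specific arithmetic $q_0\oplus q_1\neq 1$ when $q_0+q_1\in\{2,3\}$, not from the absence of $2$-cuts. Nothing in your argument rules out a $c$-cut with $c\ge 3$ producing a \G-value in the forbidden range $[qs,(q+1)s-1]$; Lemma~\ref{lem:nooptq} only blocks this when $c\ge p$, which is typically false. Moreover, invoking Proposition~\ref{prop:1,2=>3} in the case $O\neq\emptyset$ still requires both relevant cut-numbers to lie in $[2,2p+1]$, which you have not secured (the odd element of $O$ could in principle exceed $2p+1$ if $p$ is small). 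In short, the outline correctly isolates where the difficulty lies but does not supply the missing idea; the conjecture remains open exactly at the point your plan identifies.
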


For some games, the conjecture holds but where a non-computer justification becomes at the best tedious (e.g. $\C=\{1,2,7\}$). A general proof is probably hard to obtain, and meanwhile this motivates the testing conditions. The AP-test is a rather short computation to justify arithmetic-periodicity of a game; for even more simplicity, we wonder whether the condition AP3 could be removed from the test. 

\begin{problem}
Do AP1 and AP2 imply AP3?
\end{problem}

Note that removing AP3 from the AP-test would not have much impact on the computational complexity of the test, but this would make the AP-test closer to its counterparts for other take-and-break games.

Concerning the ultimate property of the sequence, we have seen that in almost all considered cases, there is no preperiod. However, the case $\C=\{1,2,8\}$ suggests that some games have one. Therefore, the question of finding an equivalent of the AP-test for these games makes sense.

\begin{problem}
Find a generalization of the AP-test to prove ultimate arithmetic-periodicity of partition games.
\end{problem}

As indicated by Figure~\ref{fig:12EstRelou}, the game \preG{\{1,2\}} leaves a couple of questions open. 
What is the number of occurrences of each Sprague-Grundy value of $\preG{\{1,2\}}$? We already know from Theorem~\ref{lem:ericslemma} that every Sprague-Grundy value appears at most twice in the sequence of $\preG{\{1,2\}}$ (apply Theorem~\ref{lem:ericslemma} with $c=2$).

\begin{problem}
Does each Sprague-Grundy value appear at least once in the sequence of \preG{\{1,2\}}? More precisely, does each \G-value appear exactly twice in the sequence of \preG{\{1,2\}}?
\end{problem}

\begin{problem}
What is the behavior of the Sprague-Grundy sequence of \preG{\{1,2\}}?
\end{problem}

Recall that from Proposition~\ref{prop:equivalence}, this game is equivalent to {\sc hexadecimal} $\mathbf{0.7F}$ (that is also claimed unsolved), with rules: remove one token and leave no heap, remove one token and leave one heap, remove one token and leave two heaps, remove two tokens and leave no heap, remove two tokens and leave one heap, remove two tokens and leave two heaps, or remove two tokens and leave three heaps. The {\sc cut} description is here more concise, (without removing any token) split your heap in two or three parts,  and with equivalent behavior.

On the other hand a full generalization of hexadecimal games includes as a subclass {\sc cut}, but we feel perhaps the general class is too large, and further classification of partition games appears a more approachable (sub-)goal. Another  interesting direction would be to pursue Grundy's idea to disallow symmetric partitions (but with no removal), but we have not yet looked into that.

\section*{Acknowledgement}
We would like to thank R. J. Nowakowski for interesting discussions regarding the connection of {\sc cut} with other instances of {\sc take-and-break}.


\begin{thebibliography}{99}
\bibitem{LIP} M. Albert, R. J. Nowakowski \& D. Wolfe, {\em Lessons in Play: An Introduction to Combinatorial Game Theory}, A.K. Peters (2007).

\bibitem{austin} R. B. Austin, {\em Impartial and Partizan Games}, M.Sc. Thesis, The University of Calgary (1976).

\bibitem{WW} E. R. Berlekamp,  J. H. Conway, \& R. K. Guy. {\em Winning ways for your mathematical plays (Vol. 1)}. Natick, MA: AK Peters (2001).

\bibitem{couples} I. Caines, C. Gates, R. K. Guy \& R. J. Nowakowski, Periods in taking and partitioning games, {\em Amer. Math. Monthly} {\bf 106} (1999) 359--361.

\bibitem{D} T. R. Dawson (1935) ``Caissa’s Wild Roses'', in \emph{Five Classics of Fairy Chess}, Dover Publications Inc., New York (1973).

\bibitem{DR} E. Duch\^ene \& M. Rigo, Invariant games, {\em Theoretical Computer
              Science} {\bf 411} (2010) 3169--3180.

\bibitem{F} A. Flammenkamp, Sprague-Grundy values of some octal games, \url{wwwhomes.uni-bielefeld.de/achim/octal.html}.

\bibitem{officer} J. P.  Grossman, Searching for Periodicity in Officers, {\em Games of No Chance 5}, MSRI Publications, Cambridge University Press, Cambridge, volume 70 (2019).

\bibitem{ruler} J. P. Grossman \& R. J. Nowakowski, A ruler regularity in hexadecimal games, {\em Games of No Chance 4}, MSRI Publications, Cambridge University Press, Cambridge (2013).

\bibitem{G} P. M. Grundy. Mathematics and games. {\em Eureka} {\bf 2} (1939) 6--9.

\bibitem{GS} R. K. Guy and C. A. B. Smith “The G-values of various games” \emph{Proc. Camb. Phil. Soc.} 52, (1956) 512--526.



\bibitem{pass} D. G. Horrocks \& R. J. Nowakowski, Regularity in the $\mathbb{G}$-sequences of octal games with a pass, {\em INTEGERS} {\bf 3} (2003), G1.

\bibitem{Hexa} S. Howse \& R. J. Nowakowski, Periodicity and arithmetic-periodicity in hexadecimal games, {\em Theoretical Computer Science} {\bf 313} (2004) 463--472.

\bibitem{Nowa} R. J. Nowakowski, Unsolved problems in combinatorial games, {\em Games of No Chance 5}, MSRI Publications, Cambridge University Press, Cambridge, volume 70 (2019).

\bibitem{Siegel} A. N. Siegel, {\em Combinatorial Game Theory}, San Francisco, CA (2013).

\bibitem{S}  R. P. Sprague, \"Uber mathematische Kampfspiele, \emph{T\^ohoku Math. J.} 41, 438--444, (1935-36), ZBl, 13, 290
\end{thebibliography}
\end{document}